%
%
%

\documentclass[graybox]{svmult}


\usepackage{type1cm}        
%
\usepackage{makeidx}         
\usepackage{graphicx}        
\usepackage{multicol}        
\usepackage[bottom]{footmisc}

\usepackage{newtxtext}       %
\usepackage[varvw]{newtxmath}       


\makeindex             


\usepackage{xcolor}
\usepackage{graphicx}
\usepackage{tikz,pgfplots}
\usepackage{chemformula,chemfig,chemmacros}
\usepackage{kbordermatrix}
\usepackage{comment}
\usepackage{subcaption}
\usepackage{enumitem}

\definecolor{mycolor1}{rgb}{0.00000,0.44700,0.74100}
\definecolor{mycolor2}{rgb}{0.8500, 0.3250, 0.0980}
\definecolor{mycolor3}{rgb}{0.9290, 0.6940, 0.1250}
\definecolor{mycolor4}{rgb}{0.4940, 0.1840, 0.5560}
\definecolor{mycolor5}{rgb}{0.4660, 0.6740, 0.1880}

\newcommand{\Kbar}{\overline{K}}

\begin{document}

\title*{Polynomial Equations: Theory and Practice}
\author{Simon Telen}
\institute{Simon Telen \at CWI, Science Park 123, 1098XG Amsterdam and Max Planck Institute for Mathematics in the Sciences (current), Inselstrasse 22, 04103 Leipzig, \email{simon.telen@mis.mpg.de}}
%
%
\maketitle

\abstract*{Solving polynomial equations is a subtask of polynomial optimization. This article introduces systems of such equations and the main approaches for~solving them. We discuss critical point equations, algebraic varieties, and solution counts. The theory is illustrated by many examples using different software packages.}

\abstract{Solving polynomial equations is a subtask of polynomial optimization. This article introduces systems of such equations and the main approaches for~solving them. We discuss critical point equations, algebraic varieties, and solution counts. The theory is illustrated by many examples using different software packages.}

\section{Polynomial equations in optimization}
\label{sec:1}

Polynomial equations appear in many fields of science and engineering. Some examples are chemistry \cite{dickenstein2016biochemical,muller2016sign}, molecular biology \cite{emiris1999computer}, computer vision \cite{kukelova2008automatic}, economics and game theory \cite[Chapter~6]{sturmfels2002solving}, topological data analysis \cite{breiding2020algebraic}, and partial differential equations \cite[Chapter~10]{sturmfels2002solving}. For an overview and more references, see \cite{breiding2021nonlinear,cox2020applications}. This article will be a chapter in the forthcoming book \emph{Polynomial optimisation, moments and applications} presenting research acitivies conducted in the European Network POEMA. In that context, polynomial equations arise from optimization problems.

Let us consider the problem of minimizing a polynomial objective function $f(x_1, \ldots, x_k) $ over the set $X = \{ x \in \mathbb{R}^k \, :\, h_1(x) = \cdots = h_\ell(x) = 0 \} \subset \mathbb{R}^k$, where also $h_1, \ldots, h_\ell$ are polynomials in the $k$ variables $x=(x_1, \ldots, x_k)$. This is a \emph{polynomial optimization problem} \cite{laurent2009sums}, often written as
\begin{equation} \label{eq:opt}
\setlength{\jot}{3pt}
\begin{aligned}
&\!\min_{x \in \mathbb{R}^k}        &\qquad& f(x_1, \ldots, x_k), \\
&\text{subject to} &      & h_1(x_1, \ldots, x_k) \, = \,  \cdots \,  = \, h_\ell(x_1, \ldots, x_k) \, = \,  0. 
\end{aligned}
\end{equation} 
Introducing new variables $\lambda_1, \ldots, \lambda_\ell$ we obtain the Lagrangian $L = f - \lambda_1h_1 - \cdots - \lambda_\ell h_\ell$, whose partial derivatives give the optimality conditions
\begin{equation} \label{eq:lagopt}
\frac{\partial L}{\partial x_1}\, = \,  \cdots \, = \,  \frac{\partial L}{\partial x_k} \, = \,  h_1 \, = \,  \cdots \, = \,  h_\ell \, = \,  0.
\end{equation} 
A solution in $\mathbb{R}^k$ is a candidate minimizer. Many methods for solving the equations \eqref{eq:lagopt}, like those presented in Section \ref{sec:methods}, compute all complex solutions first and then select the real ones among them. The number of solutions over $\mathbb{C}$ is typically finite. We present two examples of \eqref{eq:opt}. First, we minimize the distance to algebraic varieties.
\begin{example}{Example: Euclidean distance degree}
Given a point $y = (y_1, \ldots, y_k) \in \mathbb{R}^k$, we consider the squared Euclidean distance function $f(x_1, \ldots, x_k) = \lVert x-y \rVert_2^2 = (x_1 - y_1)^2 + \cdots + (x_k - y_k)^2$. As above, $X$ is the set  
$\{ x \in \mathbb{R}^k ~|~ h_1 = \cdots = h_\ell = 0 \}$.
 The solution $x^*$ of the optimization problem \eqref{eq:opt} is
 \begin{equation} \label{eq:EDdeg}
 x^* \, = \, \arg \min_{x \in X} \, \lVert x - y \rVert^2_2, 
 \end{equation}
 i.e.~the point on $X$ that is closest to $y$. The algebraic complexity of this problem is studied in \cite{draisma2016euclidean}. 
 For instance, let $k = 2, \ell = 1$ and let $X$ be the unit ball with respect to the 4-norm $\lVert \cdot \rVert_4$: $h = h_1 = x_1^4 + x_2^4 - 1$. We want to find the point on $X$ closest to $y = (2, 1.4)$. In \texttt{Mathematica} \cite{mathematica}, one solves \eqref{eq:lagopt} as follows:
 
 \scriptsize
 \begin{verbatim}
      f = (x1 - 2)^2 + (x2 - 1.4)^2; h = x1^4 + x2^4 - 1; L = f - lambda*h;
      NSolve[{D[L, x1] == 0 &&  D[L, x2] == 0 &&  h == 0}, Reals]
 \end{verbatim}
 \normalsize  
 
 \noindent This returns two critical points on $X$. One of them minimizes the distance to $y$, the other maximizes it. The minimizer is $x^*= (0.904944, 0.757564)$. If we delete the option \texttt{Reals}, the program returns 16 complex solutions.
\end{example}
\noindent Second, we set up a polynomial optimization problem from system identification. 
\begin{example}{Example: parameter estimation for system identification}
System identification is an engineering discipline that aims to construct models for dynamical systems from measured data. A model explains the relationship between input, output, and noise. It depends on a set of \emph{model parameters}, which are selected to best fit the measured data. A \emph{discrete-time, single-input single-output linear time-invariant system} with input sequence $u: \mathbb{Z} \rightarrow \mathbb{R}$, output sequence $y: \mathbb{Z} \rightarrow \mathbb{R}$ and white noise sequence $e: \mathbb{Z} \rightarrow \mathbb{R}$ is often modeled by
\begin{equation} \label{eq:SISO}
A(q) \, y(t) \, = \,  \frac{B_1(q)}{B_2(q)} \, u(t) + \frac{C_1(q)}{C_2(q)} \, e(t).
\end{equation}
Here $A, B_1, B_2, C_1, C_2 \in \mathbb{C}[q]$ are unknown polynomials of a fixed degree in the \emph{backward shift operator} $q$, acting on $s: \mathbb{Z} \rightarrow \mathbb{R}$ by $qs(t) = s(t-1)$. The model parameters are the coefficients of these polynomials, which are to be estimated. Clearing denominators in \eqref{eq:SISO} gives 
\begin{equation} \label{eq:pme}
A(q)B_2(q)C_2(q) y(t) = B_1(q)C_2(q) u(t) + B_2(q)C_1(q) e(t).
\end{equation}
Suppose we have measured $u(0), \ldots, u(N), y(0), \ldots, y(N)$. Let $d = \max(d_A + d_{B_2} + d_{C_2}, d_{B_1} + d_{C_2}, d_{B_2} + d_{C_1})$, where $d_A, d_{B_1}, d_{B_2}, d_{C_1}, d_{C_2}$ are the degrees of our polynomials. Writing \eqref{eq:pme} for $t = d, d+1, \ldots, N$, we find algebraic relations among the coefficients of $A,B_1,B_2,C_1,C_2$. The model parameters are estimated by solving
\begin{equation*} 
\setlength{\jot}{3pt}
\begin{aligned}
\min_{\Theta \in \mathbb{R}^k}        \qquad e(0)^2 + \ldots + e(N)^2 \quad \textup{subject to}       \quad \textup{\eqref{eq:pme} is satisfied for } t = d, \ldots, N 
\end{aligned}
\end{equation*} 
where $\Theta$ consists of $e(0), \ldots, e(N)$ and the coefficients of $A, B_1, B_2, C_1, C_2$. We refer to \cite[Section 1.1.1]{batselier2013numerical} for a worked-out example and more references. 
\end{example}
More general versions of \eqref{eq:opt} add inequality constraints of the type $q_1(x) \geq 0, \ldots, q_{\ell'}(x) \geq 0$, where $q_1, \ldots, q_{\ell'}$ are polynomials. Such problems can be handled using sums-of-squares relaxations \cite{lasserre2001global}. 

Our aim in this article is to introduce systems of polynomial equations in general, and methods for solving them. The reader is encouraged to try out these methods for polynomial optimization, for instance, in a Euclidean distance computation \eqref{eq:EDdeg} or in system identification. Section \ref{sec:varieties} discusses solution sets to polynomial equations, also called \emph{algebraic varieties}, and root finding over different fields. In Section \ref{sec:nosolutions}, we present several classical upper bounds on the number of solutions. Section \ref{sec:methods} is about normal form methods and homotopy continuation methods, which are two different important approaches to solving polynomial equations. Finally, Section \ref{sec:lines} contains a case study in which we apply these methods to compute 27 lines on a cubic surface. 

\textbf{Acknowledgements.} This article is based on an introductory lecture given at the workshop \emph{Solving polynomial equations and applications} organized at CWI, Amsterdam in October 2022. I thank Monique Laurent for involving me in this~workshop, and all other speakers and attendants for making it a success. 
I was supported by a Veni grant from the Netherlands Organisation for Scientific Research (NWO).

\section{Systems of equations and algebraic varieties} \label{sec:varieties}
Let $K$ be a field with algebraic closure $\Kbar$, e.g., $K = \mathbb{R}$ and $\Kbar = \mathbb{C}$. The polynomial ring with $n$ variables and coefficients in $K$ is $R = K[x_1, \ldots, x_n]$. We abbreviate $x = (x_1, \ldots, x_n)$ and use variable names $x,y,z$ rather than $x_1,x_2,x_3$ when $n$ is small. Elements of $R$ are polynomials, which are functions $f: \Kbar^n \rightarrow \Kbar$ of the form
\[ f(x) \, = \, \sum_{\alpha \in \mathbb{N}^n} c_{(\alpha_1, \ldots, \alpha_n)} \, x_1^{\alpha_1}\cdots x_n^{\alpha_n} \, = \,  \sum_{\alpha \in \mathbb{N}^n} c_\alpha \, x^\alpha, \]
with finitely many nonzero coefficients $c_\alpha \in K$.
A system of polynomial equations~is 
\begin{equation} \label{eq:polsys}
    f_1(x) \, = \, \cdots \, = \, f_s(x) \, = \, 0,
\end{equation}
where $f_1, \ldots, f_s \in R$. By a \emph{solution} of \eqref{eq:polsys}, we mean a point $x \in \Kbar^n$ satisfying all of these $s$ equations. \emph{Solving} usually means finding coordinates for all solutions. This makes sense only when the set of solutions is finite, which typically happens when $s \geq n$. However, systems with infinitely many solutions can be `solved' too, in an appropriate sense \cite{sommese2001numerical}. We point out that one is often mostly interested in solutions $x \in K^n$ over the ground field $K$. The reason for allowing solutions over the algebraic closure $\Kbar$ is that many solution methods, like those discussed in Section \ref{sec:methods}, intrinsically compute all such solutions. For instance, \eqref{eq:lagopt} is a polynomial system with $n = s = k + \ell$, and the field is $K = \mathbb{R}$. Here are some other examples.

\begin{example}{Example: univariate polynomials ($n=1$) and linear equations} 
When $n = s = 1$, solving the polynomial system defined by $f = a_0 + a_1 x + \cdots + a_d x^d \in K[x]$, with $a_d \neq 0$, amounts to finding the roots of $f(x) = 0$ in $\Kbar$. These are the eigenvalues of the $d \times d$ \emph{companion matrix} 
\begin{equation} \label{eq:companionmatrix}
    C_f \, = \, \begin{pmatrix}
     &  & &-a_0/a_d \\
     1 &  & & -a_1/a_d \\
     & \ddots & &\vdots \\
     & & 1 & -a_{d-1}/a_d
    \end{pmatrix}
\end{equation}
of $f$, whose characteristic polynomial is $\det(x \cdot {\rm id} - C_f) = a_d^{-1} \cdot f$.

When $f_i = \sum_{j=1}^n a_{ij} \, x_j - b_i$ are given by affine-linear functions, \eqref{eq:polsys} is a linear system of the form $Ax = b$, with $A \in K^{s \times n}$, $b \in K^s$. 
\end{example}
\noindent This example shows that, after a trivial rewriting step, the univariate and affine-linear cases are reduced to a \emph{linear algebra} problem. Here, we are mainly interested in the case where $n > 1$, and some equations are of degree $>1$. Such systems require tools from \emph{nonlinear algebra} \cite{michalek2021invitation}. We proceed with an example in two dimensions. 
\begin{example}{Example: intersecting two curves in the plane} 
Let $K = \mathbb{Q}$ and $n = s = 2$. We work in the ring $R = \mathbb{Q}[x,y]$ and consider the system of equations $f(x,y) = g(x,y) = 0$ where 
\begin{align} \label{eq:running}
\begin{split}
    f \, &= \, -7 x -9 y -10 x^2 + 17 xy + 10 y^2 + 16 x^2y -17xy^2, \\
    g \, &= \, 2 x -5 y +5 x^2 + 5 xy + 5 y^2 - 6 x^2y -6 xy^2.
    \end{split}
\end{align}
Geometrically, we can think of $f(x,y) = 0$ as defining a curve in the plane. This is the orange curve shown in Fig.~\ref{fig:curvesa}. The curve defined by $g(x,y) = 0$ is shown in blue. The set of solutions of $f = g = 0$ consists of points $(x,y) \in \overline{\mathbb{Q}}^2$ satisfying $f(x,y) = g(x,y) = 0$. These are the intersection points of the two curves. There are seven such points in $\overline{\mathbb{Q}}^2$, of which two lie in $\mathbb{Q}^2$. These are the points $(0,0)$ and $(1,1)$. Note that all seven solutions are real: replacing $\mathbb{Q}$ by $K = \mathbb{R}$, we count as many solutions over $K$ as over $\Kbar = \mathbb{C}$.
\begin{figure}
\begin{subfigure}[b]{0.45\textwidth}
    \centering
    \includegraphics[height = 5cm]{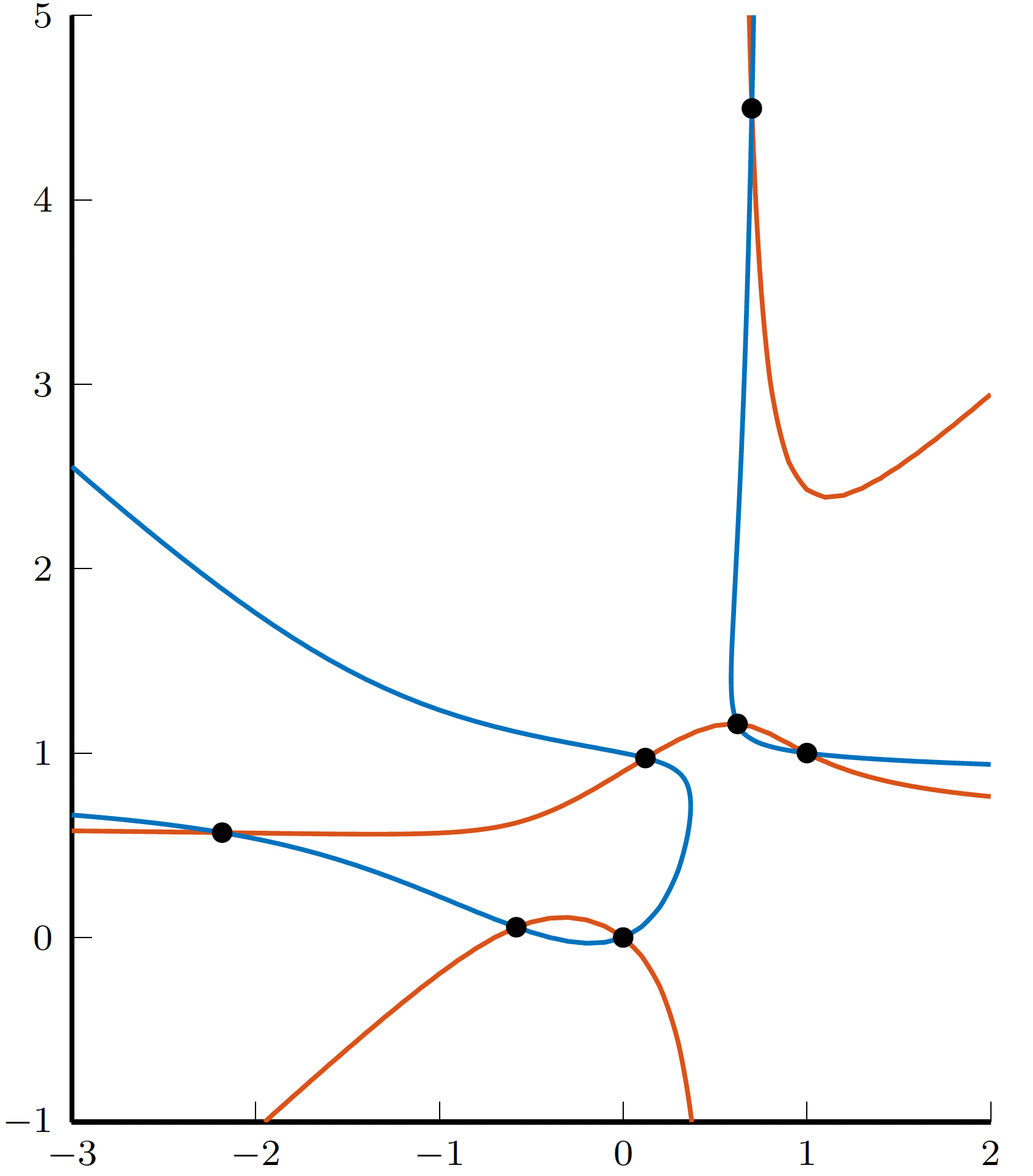} 
    \caption{Curves defined by $f,g$ from \eqref{eq:running}.}
    \label{fig:curvesa}
    \end{subfigure}
    \hfill
    \begin{subfigure}[b]{0.45\textwidth}
        \centering
        \includegraphics[height = 5cm]{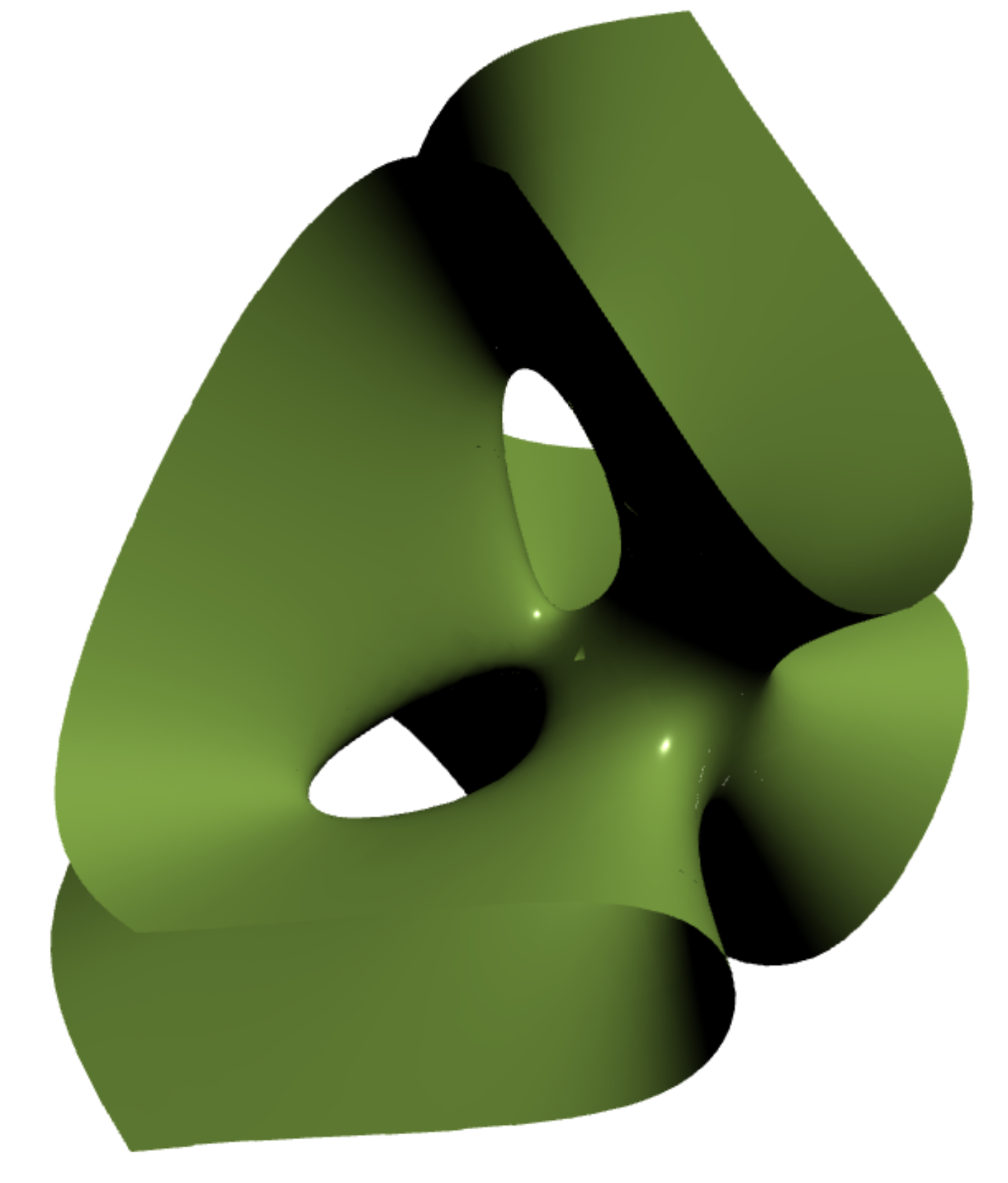}
        \caption{The Clebsch surface. }
        \label{fig:curvesb}
    \end{subfigure}
    \caption{Algebraic curves in the plane $(n=2)$ and an algebraic surface $(n=3)$.}
\end{figure}

\vspace{-0.4cm}
\end{example}
\noindent The set of solutions of the polynomial system \eqref{eq:polsys} is called an \emph{affine variety}. We denote this by $V_{\Kbar}(f_1, \ldots, f_s) = \{ x \in \Kbar^n ~|~ f_1(x) = \cdots = f_s(x) = 0 \}$, and replace $\Kbar$ by $K$ in this notation to mean only the solutions over the ground field. Examples of affine varieties are the red curve $V_{\Kbar}(f)$ and the set of black dots $V_{\Kbar}(f,g)$ in Fig.~\ref{fig:curvesa}. In the case of $V_{\Kbar}(f)$, Fig.~\ref{fig:curvesa} only shows the real part $V_{\Kbar}(f) \cap \mathbb{R}^2$.

\begin{example}{Example: surfaces in $\mathbb{R}^3$}
Let $K = \mathbb{R}$ and consider the affine variety $V = V_{\mathbb{C}}(f)$ where 
\begin{align} \label{eq:fclebsch}
\begin{split}
    f \, =&  \, \, 81 (x^3+y^3+z^3)-189 (x^2 y+x^2 z+y^2 x+y^2 z+x z^2+y z^2) +54  x y z \\ &  +126 (x y+x z+y z)-9 (x^2+y^2+z^2)-9 (x+y+z)+1. 
\end{split}
\end{align}
Its real part $V_{\mathbb{R}}(f)$ is the surface shown in Fig.~\ref{fig:curvesb}. The variety $V$ is called the \emph{Clebsch surface}. It is a \emph{cubic} surface because it is defined by an equation of degree three. We will revisit this surface in Section \ref{sec:lines}. Note that $f$ is invariant under permutations of the variables, i.e., $f(x,y,z)=f(y,x,z)=f(z,y,x)=f(x,z,y)=f(z,x,y)=f(y,z,x)$. This reflects in the symmetries of the surface $V_{\mathbb{R}}(f)$. Many polynomials from applications have similar symmetry properties. Exploiting this in computations is an active area of research, see for instance \cite{hubert:hal-03209117}.
\end{example}
More pictures of real affine varieties can be found, for instance, in \cite[Chapter 1, \S2]{cox2013ideals}, or in the algebraic surfaces gallery hosted at 
\begin{center}
\url{https://homepage.univie.ac.at/herwig.hauser/bildergalerie/gallery.html}.
\end{center}

We now briefly discuss commonly used fields $K$. In many engineering applications, the coefficients of $f_1, \ldots, f_s$ lie in $\mathbb{R}$ or $\mathbb{C}$. Computations in such fields use floating point arithmetic, yielding approximate results. The required quality of the approximation depends on the application. Other fields also show up: polynomial systems in cryptography often use $K = \mathbb{F}_q$, see for instance \cite{sala2009grobner}. Equations of many prominent algebraic varieties have integer coefficients, i.e., $K=\mathbb{Q}$. Examples are determinantal varieties (e.g., the variety of all $m \times n$ matrices of rank $< \min(m,n)$), Grassmannians in their Pl\"ucker embedding \cite[Chapter 5]{michalek2021invitation}, discriminants and resultants \cite[Sections 3.4, 5.2]{telen2020thesis} and toric varieties obtained from monomial maps \cite[Section 2.3]{telen2022introduction}. In number theory, one is interested in studying \emph{rational points} $V_{\mathbb{Q}}(f_1, \ldots, f_s) \subset V_{\overline{\mathbb{Q}}}(f_1, \ldots, f_s)$ on varieties defined over $\mathbb{Q}$. Recent work in this direction for del Pezzo surfaces can be found in \cite{mitankin2020rational,DESJARDINS2022108489}. Finally, in \emph{tropical geometry}, coefficients come from \emph{valued fields} such as the $p$-adic numbers $\mathbb{Q}_p$ or the Puiseux series $\mathbb{C} \{ \! \{ t \} \! \}$ \cite{maclagan2021introduction}. Solving over the field of Puiseux series is also relevant for \emph{homotopy continuation methods}, see Section \ref{subsec:homotopy}. We end the section with two examples highlighting the difference between $V_K(f_1, \ldots, f_s)$ and $V_{\overline{K}}(f_1, \ldots, f_s)$.

\begin{example}{Example: Fermat's last theorem}
Let $k \in \mathbb{N} \setminus \{0\}$ be a positive integer and consider the equation $f = x^k+y^k-1 = 0$. For any $k$, the variety $V_{\overline{\mathbb{Q}}}(f)$ has infinitely many solutions in $\overline{\mathbb{Q}}^2$. For $k = 1, 2$, there are infinitely many \emph{rational} solutions, i.e.~solutions in $\mathbb{Q}^2$. For $k \geq 3$, the only solutions in $\mathbb{Q}^2$ are $(1,0),(0,1)$ and, when $k$ is even, $(-1,0),(0,-1)$ \cite{darmon1995fermat}.
\end{example}

\begin{example}{Example: computing real solutions}
The variety $V_{\mathbb{C}}(x^2+y^2)$ consists of the two lines $x+\sqrt{-1} \cdot  y = 0$ and $x - \sqrt{-1} \cdot y = 0$ in $\mathbb{C}^2$. However, the real part $V_{\mathbb{R}}(x^2+y^2) = \{(0,0)\}$ has only one point. If we are interested only in this real solution, we may replace $x^2+y^2$ with the two polynomials $x,y$, which have the property that $V_{\mathbb{R}}(x^2+y^2) = V_{\mathbb{R}}(x,y) = V_{\mathbb{C}}(x,y)$. After this replacing step, an algorithm that computes all \emph{complex} solutions will still recover only the interesting solutions. It turns out that such a `better' set of equations can always be computed. The new polynomials generate the \emph{real radical ideal} associated to the original equations \cite[Sec.~6.3]{michalek2021invitation}. For recent computational progress, see \cite{baldi2021computing}. A different approach for real root finding in bounded domains is \emph{subdivision}, see \cite{mourrain2009subdivision}.
\end{example}

\section{Number of solutions} \label{sec:nosolutions}
A univariate polynomial $f \in \mathbb{C}[x]$ of degree $d$ has at most $d$ roots in $\mathbb{C}$. Moreover, $d$ is the \emph{expected} number of roots. We now formalize this. Consider the \emph{family}
\begin{equation} \label{eq:Fd}
    {\cal F}(d) \, = \, \{ a_0 + a_1 x + \cdots + a_d x^d ~|~ (a_0,\ldots, a_d) \in \mathbb{C}^{d+1} \} \, \simeq \, \mathbb{C}^{d+1} 
\end{equation}
of polynomials of degree at most $d$. There is an affine variety $\nabla_d \subset \mathbb{C}^{d+1}$, such that all $f \in {\cal F}(d) \setminus \nabla_d$ have precisely $d$ roots in $\mathbb{C}$. Here $\nabla_d = V_{\mathbb{C}}(\Delta_d)$, where $\Delta_d$ is a polynomial in the coefficients $a_i$ of $f \in {\cal F}(d)$. Equations for small $d$ are
\begin{align*}
\Delta_1 &= a_1,\\
   \Delta_2 &= a_2 \cdot ( a_{1}^{2}-4\,a_{0}a_{2} ),\\
   \Delta_3 &= a_3 \cdot ( a_{1}^{2}a_{2}^{2}-4\,a_{0}a_{2}^{3}-4\,a_{1}^{3}a_{3}+18\,a_{0}a_{1}a_{2}a_{3}-27\,a_{0}^{2}a_{3}^{2}), \\
   \Delta_4 &= a_4 \cdot( a_{1}^{2}a_{2}^{2}a_{3}^{2}-4\,a_{0}a_{2}^{3}a_{3}^{2}-4\,a_{1}^{3}a_{3}^{3}+18\,a_{0}a_{1}a_{2}a_{3}^{3}+ \cdots +256\,a_{0}^{3}a_{4}^{3}).
\end{align*}
Notice that $\Delta_d = a_d \cdot \widetilde{\Delta}_d$, where $\widetilde{\Delta}_d$ is the \emph{discriminant} for degree $d$ polynomials. Similar results exist for families of polynomial systems with $n > 1$, which bound the number of isolated solutions from above by the \emph{expected} number. This section states some of these results. \emph{It assumes that $K = \Kbar$ is algebraically closed.}

\subsection{B\'ezout's theorem} \label{subsec:bez}
Let $R = K[x] = K[x_1, \ldots, x_n]$. A \emph{monomial} in $R$ is a finite product of variables: $x^\alpha = x^{\alpha_1} \cdots x^{\alpha_n}$, $\alpha \in \mathbb{N}^n$. The \emph{degree} of the monomial $x^\alpha$ is $\deg(x^\alpha) = \sum_{i=1}^n \alpha_i$, and the degree of a polynomial $f = \sum_{\alpha} c_\alpha x^\alpha$ is $\deg(f) = \max_{ \{ \alpha \, : \,  c_\alpha \neq 0\} } \deg(x^\alpha)$. We define the vector subspaces 
\[ R_d \, = \, \{ f \in R \, : \, \deg(f) \leq d \}, \quad d  \in \mathbb{N}.\]
For an $n$-tuple of degrees $(d_1, \ldots, d_n)$, we define the family of polynomial systems
\[ {\cal F}(d_1, \ldots, d_n) \, = \, R_{d_1} \times \cdots \times R_{d_n}.\]
That is, $F = (f_1, \ldots, f_n) \in {\cal F}(d_1, \ldots, d_n)$ satisfies $\deg(f_i) \leq d_i, i = 1, \ldots, n$, and represents the polynomial system $F = 0$ with $s = n$. We leave the fact that ${\cal F}(d_1, \ldots, d_n) \simeq K^D$, with $D = \sum_{i=1}^n \binom{n+d_i}{n}$, as an exercise to the reader. Note that this is a natural generalization of \eqref{eq:Fd}. The set of solutions of $F = 0$ is denoted by $V_K(F) = V_K(f_1, \ldots, f_n)$, and a point in $V_K(F)$ is \emph{isolated} if it does not lie on a component of $V_K(F)$ with dimension $\geq 1$.

\newpage
\begin{theorem}[B\'ezout] \label{thm:bezout}
For any $F =(f_1, \ldots, f_n) \in {\cal F}(d_1, \ldots, d_n)$, the number of isolated solutions of $f_1 = \cdots = f_n = 0$, i.e., the number of isolated points in $V_K(F)$, is at most $d_1 \cdots d_n$. Moreover, there exists a proper subvariety $\nabla_{d_1,\ldots, d_n} \subsetneq K^D$ such that, when $F \in {\cal F}(d_1, \ldots, d_n) \setminus \nabla_{d_1, \ldots, d_n}$, $V_K(F)$ consists of exactly $d_1 \cdots d_n$ isolated~points.
\end{theorem}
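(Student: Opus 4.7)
The plan is to split the theorem into (i) an upper bound on the number of isolated solutions, proved by passing to projective space and invoking the classical projective Bézout theorem, and (ii) a genericity statement, proved by exhibiting a single explicit witness $F^\star$ that achieves the bound and then showing that the locus of ``bad'' systems cuts out a proper subvariety of $K^D$. For the upper bound, I would homogenize each $f_i$ of degree $\leq d_i$ to a homogeneous polynomial $\tilde f_i \in K[x_0, \ldots, x_n]$ of degree exactly $d_i$ (padding with $x_0$'s), so that the projective variety $\tilde V = V(\tilde f_1, \ldots, \tilde f_n) \subset \mathbb{P}^n$ contains the embedded affine variety $V_K(F)$ as its chart $x_0 \neq 0$. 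An isolated point $p \in V_K(F)$ remains isolated in $\tilde V$: any positive-dimensional component of $\tilde V$ through $[1:p]$ cannot lie in $\{x_0=0\}$ (as $[1:p]$ does not), hence meets the affine chart in a positive-dimensional subvariety, contradicting isolation of $p$. Projective Bézout then gives
\[ \sum_{p \in \tilde V \text{ isolated}} i(p;\, \tilde f_1, \ldots, \tilde f_n) \, \leq \, d_1 \cdots d_n, \]
with equality when $\tilde V$ is zero-dimensional. Since each intersection multiplicity is $\geq 1$, the count of isolated affine points is at most $d_1 \cdots d_n$.

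For the genericity part, I would first exhibit a concrete $F^\star$ that attains the bound. The triangular system $f_i^\star = \prod_{j=1}^{d_i}(x_i - \alpha_{i,j})$ with distinct scalars $\alpha_{i,j} \in K$ has exactly $d_1 \cdots d_n$ simple isolated solutions, namely the points $(\alpha_{1,j_1}, \ldots, \alpha_{n,j_n})$. To package the bad locus as a proper subvariety, I would consider the incidence variety $\mathcal I = \{(F,[x]) \in K^D \times \mathbb{P}^n : \tilde f_1(x) = \cdots = \tilde f_n(x) = 0\}$ together with the projection $\pi : \mathcal I \to K^D$. Upper semicontinuity of fiber dimension shows that the locus $\nabla_1 \subset K^D$ where $\tilde V$ has a positive-dimensional component is closed; the locus $\nabla_2$ where some projective solution has intersection multiplicity $> 1$, or lies on the hyperplane $\{x_0=0\}$, is cut out by the vanishing of a suitable multivariate resultant-discriminant polynomial in the coefficients of $F$, hence is also closed. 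Since the witness $F^\star$ avoids $\nabla_{d_1,\ldots,d_n} := \nabla_1 \cup \nabla_2$, this is a proper subvariety of $K^D$; for every $F$ outside it, projective Bézout yields exactly $d_1 \cdots d_n$ transverse intersection points, all lying in the affine chart.

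The main technical obstacle is projective Bézout itself, which requires a robust notion of intersection multiplicity and either a Hilbert-polynomial computation on the coordinate ring of the intersection or the Chow-ring identity $(d_1 H) \cdots (d_n H) = (d_1 \cdots d_n)\, H^n$ in $A^\ast(\mathbb{P}^n)$; in a survey this may reasonably be cited as a black box. A more self-contained path to the upper bound proceeds by iterated univariate resultants $\mathrm{Res}_{x_j}(f_i, f_k)$, reducing inductively to the univariate case of Example \ref{ex:univariate} while carefully tracking how degrees multiply under elimination. The remaining work—exhibiting the witness and verifying constructibility of the bad locus—is then routine.
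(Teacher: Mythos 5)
The paper does not actually prove Theorem \ref{thm:bezout}; it defers entirely to the reference given right after the statement (Eisenbud--Harris, Theorem III-71). So your proposal is not competing with an in-paper argument, and its overall architecture --- homogenize, control the behaviour at the hyperplane $\{x_0=0\}$, invoke projective B\'ezout for the bound, then exhibit the triangular witness $f_i^\star=\prod_j(x_i-\alpha_{i,j})$ and package the bad locus via the incidence variety $\mathcal I\subset K^D\times\mathbb{P}^n$ and properness of $\pi$ --- is the standard route and is essentially sound. The argument that an isolated affine point stays isolated in the projective closure is correct, and the witness does avoid both the positive-dimensional locus and the at-infinity/multiplicity locus, so $\nabla_{d_1,\ldots,d_n}$ is proper and the generic count follows from the zero-dimensional projective B\'ezout.

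The one place where you claim more than your stated black box delivers is the upper bound itself. The inequality $\sum_{p\ \mathrm{isolated}} i(p)\le d_1\cdots d_n$ in the presence of positive-dimensional (excess) components of $\tilde V$ is \emph{not} a consequence of the Chow-ring identity $(d_1H)\cdots(d_nH)=d_1\cdots d_n\,H^n$ or of a Hilbert-polynomial computation; those only give the count when the intersection is zero-dimensional. What you need is precisely the refined B\'ezout theorem (Fulton's intersection theory, or the Eisenbud--Harris statement the paper cites), or, over $\mathbb{C}$, a conservation-of-number/perturbation argument showing each isolated point absorbs at least one solution of a nearby generic system. So the correct way to phrase your step (i) is to cite that refined statement as the black box, rather than the elementary versions you name; the iterated-resultant alternative you mention also has known pitfalls (extraneous factors, points at infinity) and would need real care. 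A minor further overclaim: the locus of systems with a singular solution or a solution at infinity need not be the zero set of a \emph{single} resultant-discriminant polynomial (its codimension can exceed one), but this is harmless --- the image-of-a-closed-set-under-a-proper-projection argument you already use for $\nabla_1$ gives closedness of $\nabla_2$ as well, which is all the theorem requires.
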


The proof of this theorem can be found in \cite[Theorem III-71]{eisenbud2006geometry}. As in our univariate example, the variety $\nabla_{d_1, \ldots, d_n}$ can be described using discriminants and resultants. See, for instance, the discussion at the end of \cite[Section 3.4.1]{telen2020thesis}. Theorem \ref{thm:bezout} is an important result and gives an easy way to bound the number of isolated solutions of a system of $n$ equations in $n$ variables. The bound is \emph{almost always tight}, in the sense that the only systems with fewer solutions lie in $\nabla_{d_1,\ldots, d_n}$. Unfortunately, many systems coming from applications lie inside $\nabla_{d_1,\ldots, d_n}$. 
 Here is an example.

\begin{example}{Example: a planar robot arm}
This example comes from robotics. Consider a planar robot arm whose shoulder is fixed at the origin $(0,0)$ in the plane, and whose two arm segments have fixed lengths $L_1$ and $L_2$. We determine the possible positions of the elbow $(x,y)$, given that the hand of the robot touches a given point $(a,b)$. The situation is illustrated in Fig.~\ref{fig:robot}. The Pythagorean theorem gives the identities
\begin{equation} \label{eq:robotics} x^2 + y^2 - L_1^2 \, = \, (a-x)^2+(b-y)^2-L_2^2 \, = \, 0, \end{equation}
which is a system of $s = 2$ equations in $n = 2$ variables $x, y$. The plane curves corresponding to these equations are shown in Fig.~\ref{fig:robot}. Their intersection points are the possible configurations. Naturally, more complicated robots lead to more involved equations, see \cite{wampler2011numerical}. The system \eqref{eq:robotics} with $K = \mathbb{C}$ lies in $\nabla_{2,2}$: the two real solutions seen in Fig.~\ref{fig:robot} are the only solutions over $\mathbb{C}$, and $2 < d_1 \cdot d_2 = 4$. However, the slightest perturbation of the equations introduces two extra solutions. For instance, replace the first equation with $x^2 + \epsilon \cdot xy + y^2 -L_1^2 = 0$, for small $\epsilon$. The resulting system lies in ${\cal F}(2,2) \setminus \nabla_{2,2}$. It has four complex solutions, two of which lie close to the intersection points in Figure \ref{fig:robot}. The other two are \emph{large}, see Remark \ref{rem:projspace}.
\begin{figure}
    \centering
    \includegraphics[height = 4cm]{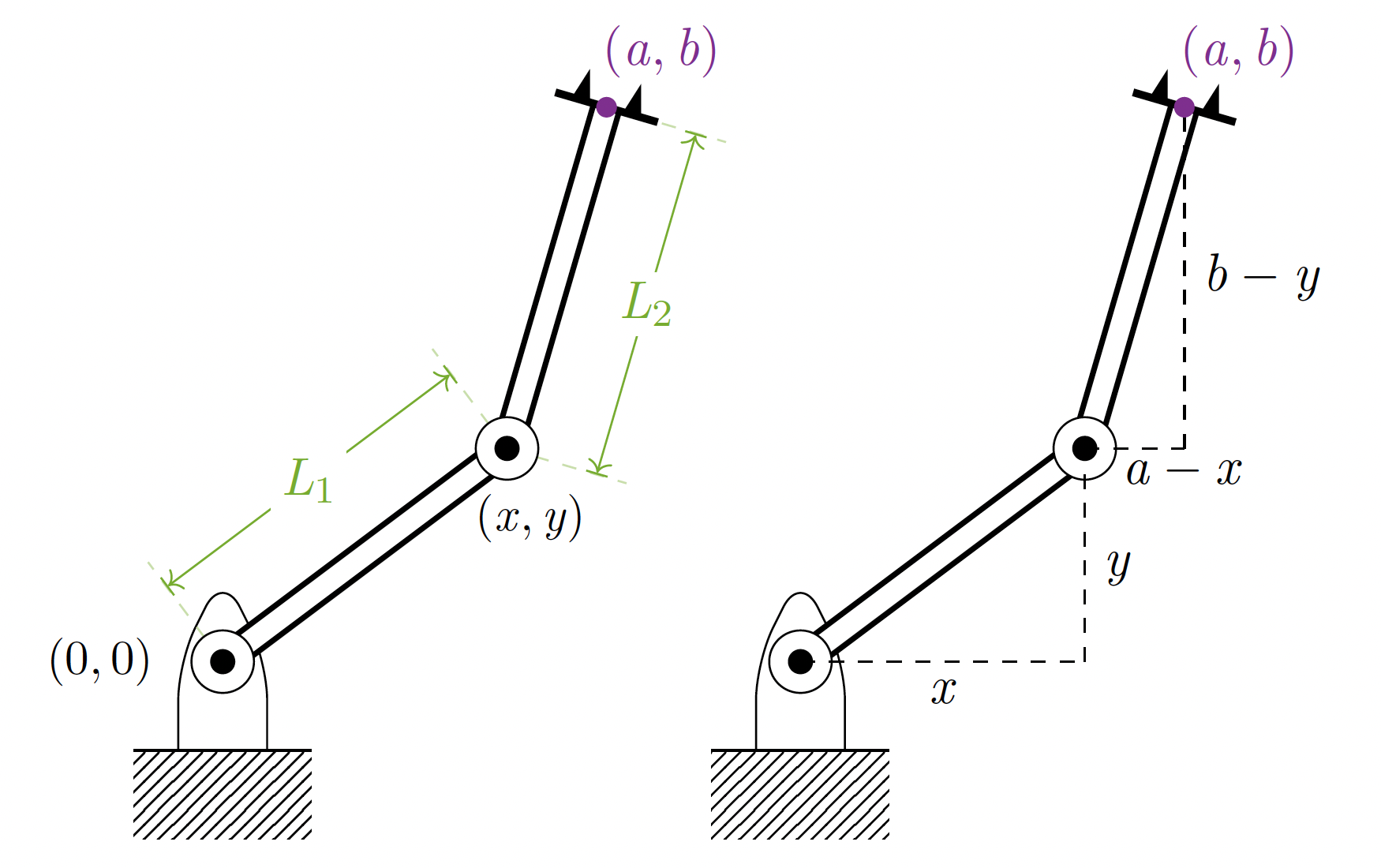}
    \quad
    \includegraphics[height = 5cm]{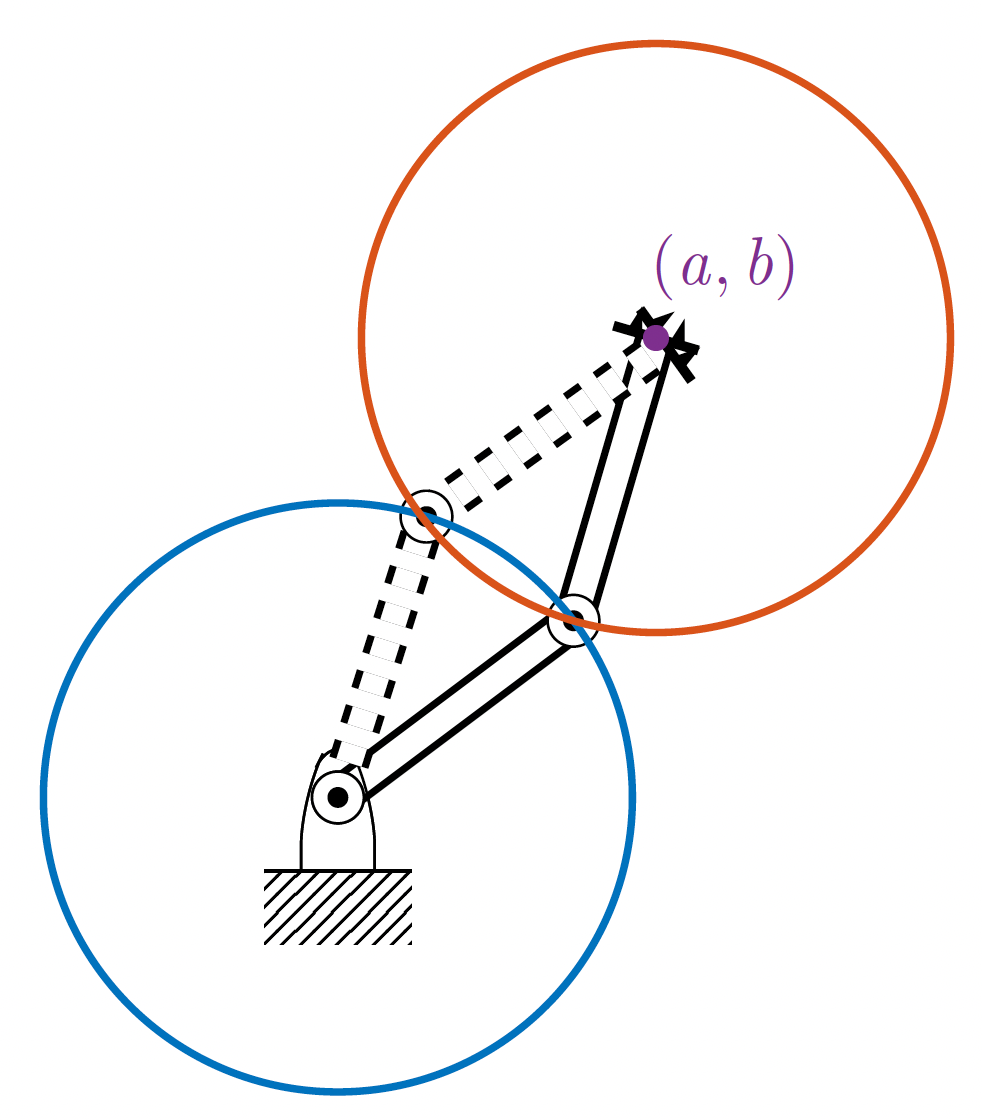}
    \caption{The two configurations of a robot arm are the intersection points of two circles.}
    \label{fig:robot}
\end{figure}
\end{example}

\begin{remark} \label{rem:projspace}
B\'ezout's theorem more naturally counts solutions in \emph{projective space} $\mathbb{P}_K^n$, and it accounts for solutions with \emph{multiplicity} $>1$. More precisely, if $f_i$ is a homogeneous polynomial in $n+1$ variables of degree $d_i$, and $f_1 = \cdots = f_n = 0$ has finitely many solutions in $\mathbb{P}_K^n$, the number of solutions (counted with multiplicity) is \emph{always} $d_1 \cdots d_n$. We encourage the reader who is familiar with projective geometry to check that \eqref{eq:robotics} defines two solutions \emph{at infinity}, when each of the equations is viewed as an equation on $\mathbb{P}^2_{\mathbb{C}}$ by homogenizing. Introducing $\epsilon$ brings these solutions back into $\mathbb{C}^2$. Since they come from infinity, they have large coordinates. 
\end{remark}

\subsection{Kushnirenko's theorem}
An intuitive consequence of Theorem \ref{thm:bezout} is that \emph{random} polynomial systems given by polynomials of fixed degree always have the same number of solutions. Looking at $f$ and $g$ from \eqref{eq:running}, we see that they do not look so \emph{random}, in the sense that some monomials of degree $\leq 3$ are missing. For instance, $x^3$ and $y^3$ do not appear. Having zero coefficients standing with some monomials in ${\cal F}(d_1, \ldots, d_n)$ is sometimes enough to conclude that the system lies in $\nabla_{d_1, \ldots, d_n}$. That is, the system is not \emph{random} in the sense of B\'ezout's theorem. 
The \emph{(monomial) support} of $f = \sum_{\alpha} c_\alpha x^\alpha$~is
\[ {\rm supp} \big (\sum_{\alpha} c_\alpha x^\alpha \big ) \, = \, \{ \alpha \, : \, c_\alpha \neq 0 \} \, \subset \, \mathbb{N}^n.\]
This subsection considers families of polynomial systems with fixed support. Let ${\cal A} \subset \mathbb{N}^n$ be a finite subset of exponents of cardinality $|{\cal A}|$. We define 
\[ {\cal F}({\cal A}) \, = \, \{ \, (f_1, \ldots, f_n) \in R^n \, : \, {\rm supp}(f_i) \subset {\cal A}, \, i = 1, \ldots, n \, \} \, \simeq \, K^{\, n \cdot | {\cal A} |}. \]
The next theorem expresses the number of solutions for systems in this family in terms of the volume ${\rm Vol}({\cal A}) = \int_{{\rm Conv}({\cal A})} {\rm d} \alpha_1 \cdots {\rm d} \alpha_n$ of the convex polytope
\begin{equation} \label{eq:conv} {\rm Conv}({\cal A}) \, = \, \left \{ \sum_{\alpha \in {\cal A}} \lambda_\alpha \cdot \alpha \, : \, \lambda_\alpha \geq 0, \, \sum_{\alpha \in {\cal A}} \lambda_\alpha = 1  \right \} \, \,  \subset \mathbb{R}^n. \end{equation}
The \emph{normalized volume} ${\rm vol}({\cal A})$ is defined as $n! \cdot {\rm Vol}({\cal A})$.
\begin{theorem}[Kushnirenko] \label{thm:kushnirenko}
For any $F = (f_1, \ldots, f_n) \in {\cal F}({\cal A})$, the number of isolated solutions of $f_1 = \cdots = f_n = 0$ in $(K \setminus \{0\})^n$, i.e., the number of isolated points in $V_K(F) \cap (K \setminus \{0\})^n$, is at most ${\rm vol}({\cal A})$. Moreover, there exists a proper subvariety $\nabla_{\cal A} \subsetneq K^{\, n \cdot | {\cal A} |}$ such that, when $F \in {\cal F}({\cal A}) \setminus \nabla_{\cal A}$, $V_K(F) \cap (K \setminus \{0\})^n$ consists of ${\rm vol}({\cal A})$ isolated points.
\end{theorem}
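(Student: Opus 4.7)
The natural route is through toric geometry: embed the torus $(K \setminus \{0\})^n$ into projective space via the monomials in $\mathcal{A}$, interpret each $f_i$ as a hyperplane section of the resulting toric variety $X_{\mathcal{A}}$, and translate the counting problem into computing $\deg(X_{\mathcal{A}})$. The theorem then reduces to the classical identity
\[ \deg(X_{\mathcal{A}}) \cdot [\mathbb{Z}^n : L_{\mathcal{A}}] \, = \, n! \cdot {\rm Vol}({\rm Conv}(\mathcal{A})), \]
where $L_{\mathcal{A}} \subset \mathbb{Z}^n$ is the sublattice generated by differences $\alpha - \alpha'$ with $\alpha, \alpha' \in \mathcal{A}$.

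First I would dispose of the degenerate case $\dim {\rm Conv}(\mathcal{A}) < n$: then ${\rm vol}(\mathcal{A}) = 0$, and after translating $\mathcal{A}$ by some $-\alpha_0$ and performing a monomial change of coordinates on the torus, each $f_i$ factors as a monomial times a polynomial in fewer than $n$ variables, so $V_K(F) \cap (K\setminus\{0\})^n$ is either empty or invariant under a positive-dimensional subtorus, contributing no isolated points and matching the bound $0$. Assume henceforth that ${\rm Conv}(\mathcal{A})$ is $n$-dimensional. Consider the monomial map
\[ \varphi_{\mathcal{A}} \colon (K \setminus \{0\})^n \longrightarrow \mathbb{P}_K^{\,|\mathcal{A}|-1}, \qquad x \longmapsto (x^\alpha)_{\alpha \in \mathcal{A}}, \]
and let $X_{\mathcal{A}}$ be the Zariski closure of its image. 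Each $f_i = \sum_\alpha c_{i,\alpha}\, x^\alpha$ pulls back from a hyperplane $H_i \subset \mathbb{P}_K^{\,|\mathcal{A}|-1}$, and torus zeros of $F$ are precisely the preimages under $\varphi_{\mathcal{A}}$ of points of $X_{\mathcal{A}} \cap H_1 \cap \cdots \cap H_n$ lying in the open orbit. By Bezout on $\mathbb{P}_K^{\,|\mathcal{A}|-1}$, this intersection has at most $\deg(X_{\mathcal{A}})$ points when finite, and each is hit by $[\mathbb{Z}^n : L_{\mathcal{A}}]$ preimages of $\varphi_{\mathcal{A}}$, giving the upper bound ${\rm vol}(\mathcal{A})$ once the displayed identity is known.

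Proving the displayed identity is the main obstacle, and I would attack it via Ehrhart theory: the homogeneous coordinate ring of $X_{\mathcal{A}}$ is the semigroup algebra on $\bigcup_{k \geq 0} k \cdot \mathcal{A}$, so a lattice-point count for $L_{\mathcal{A}}$ inside $k \cdot {\rm Conv}(\mathcal{A})$ shows its Hilbert function in degree $k$ asymptotes to $[\mathbb{Z}^n : L_{\mathcal{A}}]^{-1} \cdot {\rm Vol}({\rm Conv}(\mathcal{A})) \cdot k^n$, since the covolume of $L_{\mathcal{A}}$ in $\mathbb{R}^n$ equals $[\mathbb{Z}^n : L_{\mathcal{A}}]$. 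Matching with the general leading term $\bigl(\deg(X_{\mathcal{A}})/n!\bigr)\, k^n$ of the Hilbert polynomial of a projective variety of dimension $n$ yields the identity. An alternative proof of the same identity, avoiding Hilbert polynomials, is to induct on $n$ via a toric degeneration of $X_{\mathcal{A}}$ along a regular polyhedral subdivision of ${\rm Conv}(\mathcal{A})$ into unimodular simplices — the combinatorial viewpoint underlying Bernstein's theorem and polyhedral homotopy.

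For generic tightness, I would define $\nabla_{\mathcal{A}} \subset K^{\,n|\mathcal{A}|}$ as the union of two proper closed subsets: the locus where $H_1, \ldots, H_n$ fail to cut $X_{\mathcal{A}}$ transversely (cut out by vanishing of a Jacobian, proper by a Bertini-type argument applied to $X_{\mathcal{A}}$) and the locus where some intersection point escapes the open orbit into the toric boundary $X_{\mathcal{A}} \setminus \varphi_{\mathcal{A}}((K\setminus\{0\})^n)$ (proper because the boundary has codimension at least $1$, so generic $H_1 \cap \cdots \cap H_n$ misses it). Off $\nabla_{\mathcal{A}}$ the intersection consists of exactly $\deg(X_{\mathcal{A}})$ transverse points in the open orbit, each contributing $[\mathbb{Z}^n : L_{\mathcal{A}}]$ isolated torus solutions, totaling precisely ${\rm vol}(\mathcal{A})$ as required.
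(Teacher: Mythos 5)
The paper itself gives no proof of Theorem~\ref{thm:kushnirenko}: it cites \cite{kushnirenko1976newton} and points to the toric proof in \cite[Section~3.4]{telen2022introduction}, and your outline is precisely that standard toric route (degree of $X_{\cal A}$ equals the normalized volume up to the lattice index $[\mathbb{Z}^n : L_{\cal A}]$, established by comparing the Hilbert polynomial with a lattice-point count). The main genuine gap is in the upper-bound half. The theorem bounds the number of \emph{isolated} torus solutions for every $F \in {\cal F}({\cal A})$, but your B\'ezout step is invoked only ``when finite'', and finiteness of $X_{\cal A} \cap H_1 \cap \cdots \cap H_n$ is not available: even when the torus zeros are finite, the hyperplanes will in general meet the toric boundary $X_{\cal A} \setminus \varphi_{\cal A}\bigl((K\setminus\{0\})^n\bigr)$ in positive-dimensional sets, and a non-generic $F$ may also have positive-dimensional components inside the torus alongside its isolated points. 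Classical B\'ezout for proper intersections says nothing about the isolated points of such an improper intersection; you need Fulton's refined B\'ezout theorem (the sum of the degrees of the distinct irreducible components of $X_{\cal A} \cap H_1 \cap \cdots \cap H_n$ is at most $\deg X_{\cal A}$), or a perturbation/degeneration argument showing each isolated solution survives into the generic count. Without one of these, the ``at most ${\rm vol}({\cal A})$'' claim is not established.

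Two further points need care. First, the fiber count ``each point of the open orbit has $[\mathbb{Z}^n : L_{\cal A}]$ preimages'' fails when ${\rm char}\,K = p$ divides the index, because $\varphi_{\cal A}$ is then inseparable; for example ${\cal A} = \{0,p\}$ with $n=1$ gives $c_0 + c_1 x^p$, which has a single torus root for \emph{all} nonzero coefficients, so no system attains ${\rm vol}({\cal A}) = p$ distinct roots. This defect is inherited from the statement itself in positive characteristic (it is repaired by counting with multiplicities), but your argument should make the hypothesis ${\rm char}\,K = 0$, or $p \nmid [\mathbb{Z}^n : L_{\cal A}]$, or the switch to multiplicities explicit. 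Second, in the Ehrhart step the Hilbert function of the homogeneous coordinate ring of $X_{\cal A}$ in degree $k$ counts the $k$-fold sumset $k{\cal A}$, not all points of the affine lattice inside $k \cdot {\rm Conv}({\cal A})$; that the two counts have the same leading-order growth requires a justification (Khovanskii's theorem on sumsets, or passing to the normalization of the semigroup algebra and noting the degree is unchanged). With these repairs, and with your treatment of the degenerate case $\dim {\rm Conv}({\cal A}) < n$ and of the properness of the two bad loci defining $\nabla_{\cal A}$ (both of which are fine), the argument goes through.
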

For a proof, see \cite{kushnirenko1976newton}. The theorem necessarily counts solutions in $(K \setminus \{0\})^n \subset K^n$, as multiplying all equations with a monomial $x^\alpha$ may change the number of solutions in the coordinate hyperplanes (i.e., there may be new solutions with zero-coordinates). However, it does not change the normalized volume ${\rm vol}({\cal A})$. The statement can be adapted to count solutions in $K^n$, but becomes more involved \cite{huber1997bernstein}. We point out that, with the extra assumption that $0 \in {\cal A}$, one may replace $(K \setminus \{0\})^n$ by $K^n$ in Theorem \ref{thm:kushnirenko}.
To compare Kushnirenko's theorem with B\'ezout, note that ${\cal F}({\cal A})$ for
\begin{equation} \label{eq:Abez}
{\cal A} = \{ \alpha \in \mathbb{N}^n  :  \deg(x^\alpha) \leq d \} \end{equation}
is $ {\cal F}(d, \ldots, d)$, and $d^{\, n} = {\rm vol}({\cal A})$. Theorem \ref{thm:kushnirenko} recovers Theorem \ref{thm:bezout} for $d_1= \cdots = d_n$.

\begin{example}{Example: back to plane curves} 
The polynomial system $f = g = 0$ from \eqref{eq:running} belongs to the family ${\cal F}({\cal A})$ with ${\cal A} = \{ (1,0), (0,1), (2,0), (1,1), (0,2), (2,1), (1,2) \}$. The convex hull ${\rm Conv}({\cal A})$ is a hexagon in $\mathbb{R}^2$, see Fig.~\ref{fig:hexagon}. Its normalized volume is ${\rm vol}({\cal A}) = n! \cdot {\rm Vol}({\cal A}) = 2! \cdot 3 = 6$. Theorem \ref{thm:kushnirenko} predicts six solutions in $(\overline{\mathbb{Q}} \setminus \{0\})^2$. These are six of the seven black dots seen in the left part of Fig.~\ref{fig:curvesa}: the solution $(0,0)$ is not counted. We have a chain of inclusions $\nabla_{\cal A} \subset {\cal F}({\cal A}) \subset \nabla_{3,3} \subset {\cal F}(3,3)$ and $(f,g) \in {\cal F}({\cal A}) \setminus \nabla_{\cal A}$.
\begin{figure}
    \centering
    \begin{tikzpicture}[scale = 0.8]
\begin{axis}[%
width=5cm,
height=5cm,
scale only axis,
xmin=-0.5,
xmax=2.5,
xtick = \empty,
ymin=-0.5,
ymax=2.5,
ytick = \empty,
axis background/.style={fill=white},
axis line style={draw=none}
]
\addplot [color=black,solid,thick, fill opacity = 0.2, fill = mycolor4,forget plot]
  table[row sep=crcr]{%
0        1\\
1        0\\
2        0\\
2        1\\
1       2\\
0       2 \\
0        1\\
};
\addplot[only marks,mark=*,mark size=2.0pt,black
        ]  coordinates {
    (1,0) (0,1) (2,1) (1,1) (1,2) (0,2) (2,0)
};
\node at (axis cs:1.20,0.20) {\small $x$};
\node at (axis cs:1.20,1.20) {\small $xy$};
\node at (axis cs:1.20,2.20) {\small $xy^2$};
\node at (axis cs:0.20,1.20) {\small $y$};
\node at (axis cs:0.20,2.20) {\small $y^2$};
\node at (axis cs:2.20,0.20) {\small $x^2$};
\node at (axis cs:2.20,1.20) {\small $x^2y$};
\end{axis}
\end{tikzpicture}%
    \caption{The polytope ${\rm Conv}({\cal A})$ in this example is a hexagon.}
    \label{fig:hexagon}
\end{figure}
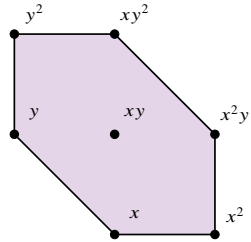
\end{example}

\begin{remark}
The analog of Remark \ref{rem:projspace} for Theorem \ref{thm:kushnirenko} is that ${\rm vol}({\cal A})$ counts solutions on the \emph{projective toric variety} $X_{\cal A}$ associated with ${\cal A}$. It equals the degree of $X_{\cal A}$ in its embedding in $\mathbb{P}_K^{|{\cal A}|-1}$ (after multiplying with a lattice index). A proof and examples are given in \cite[Section~3.4]{telen2022introduction}. When ${\cal A}$ is as in \eqref{eq:Abez}, we have $X_{\cal A}= \mathbb{P}^n$. 
\end{remark}

\begin{remark}
The convex polytope ${\rm Conv}({\rm supp}(f))$ is called the \emph{Newton polytope} of $f$. Its importance goes beyond counting solutions: it is dual to the \emph{tropical hypersurface} defined by $f$, which is a \emph{combinatorial shadow} of $V_K(f) \cap (K \setminus \{0\})^n$ \cite[Prop.~3.1.6]{maclagan2021introduction}.
\end{remark}

 \subsection{Bernstein's theorem} \label{subsec:bkk}
There is a generalization of Kushnirenko's theorem which allows different supports for the polynomials $f_1, \ldots, f_n$. We fix $n$ finite subsets of exponents ${\cal A}_1, \ldots, {\cal A}_n \subset \mathbb{N}^n$ with respective cardinalities $|{\cal A}_i|$. These define the family of polynomial systems 
\[ {\cal F}({\cal A}_1, \ldots, {\cal A}_n) \, = \, \{ \, (f_1, \ldots, f_n) \in R^n \, : \, {\rm supp}(f_i) \subset {\cal A}_i, \, i = 1, \ldots, n \, \} \, \simeq \, K^D, \]
where $D = |{\cal A}_1| + \cdots + |{\cal A}_n|$.
The number of solutions is characterized by the \emph{mixed volume} of ${\cal A}_1, \ldots, {\cal A}_n$, which we now define. The \emph{Minkowski sum} $S + T$ of two sets $S, T \subset \mathbb{R}^n$ is $\{ s + t \, : \, s \in S, t \in T \}$, where $s + t$ is the usual addition of vectors in $\mathbb{R}^n$. For a nonnegative real number $\lambda$, the \emph{$\lambda$-dilation} of $S \subset \mathbb{R}^n$ is $\lambda \cdot S = \{ \lambda \cdot s \, : \, s \in S \}$, where $\lambda \cdot s$ is the usual scalar multiplication in $\mathbb{R}^n$.   Each of the supports ${\cal A}_i$ gives a convex polytope ${\rm Conv}({\cal A}_i)$ as in \eqref{eq:conv}. The function $\mathbb{R}_{\geq 0}^n \rightarrow \mathbb{R}_{\geq 0}$ given by 
\begin{equation} \label{eq:volpol}
(\lambda_1, \ldots, \lambda_n) \longmapsto {\rm Vol}( \, \lambda_1 \cdot {\rm Conv}({\cal A}_1) \, + \, \cdots \, + \, \lambda_n \cdot {\rm Conv}({\cal A}_n) \, ) 
\end{equation}
is a homogeneous polynomial of degree $n$, meaning that all its monomials have degree $n$ \cite[Chapter 7, \S 4, Proposition 4.9]{cox2006using}. The \emph{mixed volume} ${\rm MV}({\cal A}_1, \ldots, {\cal A}_n)$ is the coefficient of the polynomial \eqref{eq:volpol} standing with the monomial $\lambda_1 \cdots \lambda_n$. 

\begin{theorem}[Bernstein-Kushnirenko] \label{thm:BKK}
For any $F = (f_1, \ldots, f_n) \in {\cal F}({\cal A}_1, \ldots, {\cal A}_n)$, the number of isolated solutions of $f_1 = \cdots = f_n = 0$ in $(K \setminus \{0\})^n$, i.e., the number of isolated points in $V_K(F) \cap (K \setminus \{0\})^n$, is at most ${\rm MV}({\cal A}_1, \ldots, {\cal A}_n)$. Moreover, there exists a proper subvariety $\nabla_{{\cal A}_1, \ldots, {\cal A}_n} \subset K^D$ such that, when $F \in {\cal F}({\cal A}_1, \ldots, {\cal A}_n) \setminus \nabla_{{\cal A}_1, \ldots, {\cal A}_n}$, $V_K(F) \cap (K \setminus \{0\})^n$ consists of precisely ${\rm MV}({\cal A}_1, \ldots, {\cal A}_n)$ isolated points. 
\end{theorem}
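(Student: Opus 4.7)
The plan is to prove this via intersection theory on a projective toric compactification of the torus $T = (K \setminus \{0\})^n$. Set $P_i = {\rm Conv}({\cal A}_i)$ and let $X = X_P$ be the projective toric variety associated to the Minkowski sum $P = P_1 + \cdots + P_n$. Then $T$ embeds as the open dense orbit of $X$. Each polytope $P_i$ determines a globally generated line bundle ${\cal L}_i$ on $X$ whose global sections contain $\{x^\alpha : \alpha \in {\cal A}_i\}$, so every $f_i$ extends uniquely to a section $s_i \in H^0(X, {\cal L}_i)$, and zeros of $F$ in $T$ correspond exactly to the torus points of $V(s_1) \cap \cdots \cap V(s_n)$.

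The first main ingredient is the toric intersection formula
\[ c_1({\cal L}_1) \cdots c_1({\cal L}_n) \, = \, {\rm MV}({\cal A}_1, \ldots, {\cal A}_n), \]
which I would cite as a standard result of toric geometry. It follows from multilinearity of intersection numbers together with the identity $c_1({\cal L})^n = n! \, {\rm Vol}(P)$ for a toric variety polarized by a polytope $P$, exactly the mechanism already flagged in the remark following Theorem \ref{thm:kushnirenko}. Applying Bertini to each linear system $|{\cal L}_i|$, for $F$ in the complement of a proper subvariety the common zero locus $V(s_1) \cap \cdots \cap V(s_n)$ is a finite reduced scheme on $X$ of length ${\rm MV}({\cal A}_1, \ldots, {\cal A}_n)$.

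The second, more delicate, ingredient is boundary avoidance. The torus-orbit stratification of $X$ is indexed by the faces of $P$; for a face $\tau$ supported by an outer normal $w$, the restriction of $s_i$ to the orbit closure $O_\tau$ is, up to a monomial twist, the facial polynomial $f_i^\tau$ obtained from $f_i$ by keeping only monomials $x^\alpha$ with $\alpha$ in the face of $P_i$ maximizing $w$. The claim to prove is that, for $F$ outside a proper subvariety in coefficient space, the facial system $(f_1^\tau, \ldots, f_n^\tau)$ has no common zero on $O_\tau$ for every proper face $\tau$ of $P$. The union of the coefficient loci where some facial system is consistent defines $\nabla_{{\cal A}_1, \ldots, {\cal A}_n}$; outside this locus the combination of Bertini and boundary avoidance yields exactly ${\rm MV}({\cal A}_1, \ldots, {\cal A}_n)$ reduced torus solutions.

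The upper bound for an arbitrary $F$ then follows by specialization: connect $F$ to a generic $F^* \notin \nabla_{{\cal A}_1, \ldots, {\cal A}_n}$ along a line $F_t = (1-t)F + t F^*$. The intersection scheme of the corresponding sections on $X$ has length at most ${\rm MV}({\cal A}_1, \ldots, {\cal A}_n)$ for every $t$ by upper semicontinuity, and each isolated zero of $F$ in $T$ contributes at least its intersection multiplicity, bounding the count. I expect the hardest step to be boundary avoidance: its standard proof proceeds by induction on $\dim P$, using that each facial subsystem, after dividing out the stabilizer of $O_\tau$, becomes a system of $n$ Laurent polynomials on a torus of dimension $\dim \tau < n$, so a dimension count on the parameter space of consistent such systems gives a proper closed condition pulling back to a proper closed condition on the original coefficients.
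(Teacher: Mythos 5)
The paper itself does not prove Theorem \ref{thm:BKK}: it is quoted with references to Bernstein \cite{bernshtein1975number} and Kushnirenko \cite{kushnirenko1976newton} (and a toric proof of the Kushnirenko case is pointed to in \cite{telen2022introduction}). Your outline is exactly the standard toric-intersection-theoretic route (compactify $(K\setminus\{0\})^n$ in $X_{P_1+\cdots+P_n}$, identify ${\rm MV}$ with $c_1(\mathcal{L}_1)\cdots c_1(\mathcal{L}_n)$ by multilinearity, prove boundary avoidance for generic coefficients, specialize for the upper bound), so as a strategy it is sound and consistent with how this theorem is usually established.

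However, as written there are three genuine gaps. First, your $\nabla_{\mathcal{A}_1,\ldots,\mathcal{A}_n}$ is defined only by consistency of some facial system on a proper face; that is not enough for the ``precisely ${\rm MV}$ isolated points'' conclusion. Facial non-degeneracy only guarantees that the intersection stays in the torus and hence is finite of total length ${\rm MV}$ counted with multiplicities (finiteness because a closed subscheme of the complete variety $X$ contained in the affine torus is finite, and the length computation uses that $X$ is Cohen--Macaulay); it does not preclude multiple points. For $n=1$, $\mathcal{A}=\{0,1,2\}$, the polynomial $(x-1)^2$ is facially non-degenerate but has one root, not two, so $\nabla$ must also contain the locus where the torus intersection is non-reduced (the classical discriminant part). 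Second, the reducedness step cannot be delegated to Bertini applied to each complete linear system $|\mathcal{L}_i|$: the admissible sections are only those supported on $\mathcal{A}_i$, which is in general a proper subsystem, and smoothness of each divisor separately does not give transversality of the $n$-fold intersection. The standard fix is a Kleiman-type generic-transversality argument using torus translations (or a direct Jacobian/discriminant dimension count), and this genuinely requires characteristic $0$ or extra care: over $\overline{\mathbb{F}_p}$ with $\mathcal{A}=\{0,p\}$, every $a+bx^p$ has a single $p$-fold root, so the ``precisely ${\rm MV}$ distinct points'' statement needs the multiplicity-counting formulation there. Third, the upper bound ``by upper semicontinuity along the pencil $F_t$'' is not rigorous as stated, because for special $F$ the intersection scheme on $X$ may have positive-dimensional (excess) components and has no well-defined finite length; the clean argument is either the refined B\'ezout/positivity statement that the sum of local intersection multiplicities at isolated points of an intersection of $n$ nef divisors on a complete $n$-fold is bounded by $c_1(\mathcal{L}_1)\cdots c_1(\mathcal{L}_n)$, or a continuity-of-roots argument carried out away from the excess locus. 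With these three repairs (enlarging $\nabla$, replacing Bertini by torus-translation transversality, and replacing semicontinuity by the nef intersection bound), your outline becomes a complete proof in characteristic $0$.
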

This theorem was originally proved by Bernstein for $K = \mathbb{C}$ in \cite{bernshtein1975number}. The proof by Kushnirenko in \cite{kushnirenko1976newton} works for algebraically closed fields. Several alternative proofs were found by Khovanskii \cite{khovanskii1978newton}. Theorem \ref{thm:BKK} is sometimes called the \emph{BKK theorem}, after the aforementioned mathematicians. Like Kushnirenko's theorem, Theorem \ref{thm:BKK} can be adapted to count roots in $K^n$ rather than $(K \setminus \{0\})^n$ \cite{huber1997bernstein}, and if $0 \in {\cal A}_i$ for all $i$, one may replace $(K \setminus \{0\})^n$ by $K^n$. 

When ${\cal A}_1 = \cdots = {\cal A}_n = {\cal A}$, we have ${\cal F}({\cal A}_1, \ldots, {\cal A}_n) = {\cal F}({\cal A})$, and when ${\cal A}_i = \{ \alpha \in \mathbb{N}^n : \deg(x^\alpha) \leq d_i \}$, we have ${\cal F}({\cal A}_1, \ldots, {\cal A}_n) = {\cal F}(d_1, \ldots, d_n)$. Hence, all families of polynomials we have seen before are of this form, and Theorem \ref{thm:BKK} generalizes Theorems \ref{thm:bezout} and \ref{thm:kushnirenko}. Note that, in particular, we have ${{\rm MV}({\cal A}, \ldots, {\cal A}) = {\rm vol}({\cal A})}$.

\begin{example}{Example: mixed areas}
A useful formula for $n = 2$ is ${\rm MV}({\cal A}_1, {\cal A}_2) = {\rm Vol}({\cal A}_1 + {\cal A}_2) - {\rm Vol}({\cal A}_1) - {\rm Vol}({\cal A}_2)$. For instance, the following two polynomials appear in \cite[Example 5.3.1]{telen2020thesis}:
\begin{align*}
f = a_0 + a_1x^3y + a_2 xy^3, \quad g = b_0 + b_1x^2 + b_2y^2 + b_3x^2y^2.
\end{align*}
The system $f = g = 0$ is a general member of the family ${\cal F}({\cal A}_1, {\cal A}_2) \simeq K^7$, where ${\cal A}_1 = \{(0,0),(3,1),(1,3) \}$ and ${\cal A}_2 = \{ (0,0),(2,0),(0,2),(2,2) \}$. The Newton polygons, together with their Minkowski sum, are shown in Fig.~\ref{fig:sparseexample}. 
\begin{figure}
\centering
\includegraphics[height = 3cm]{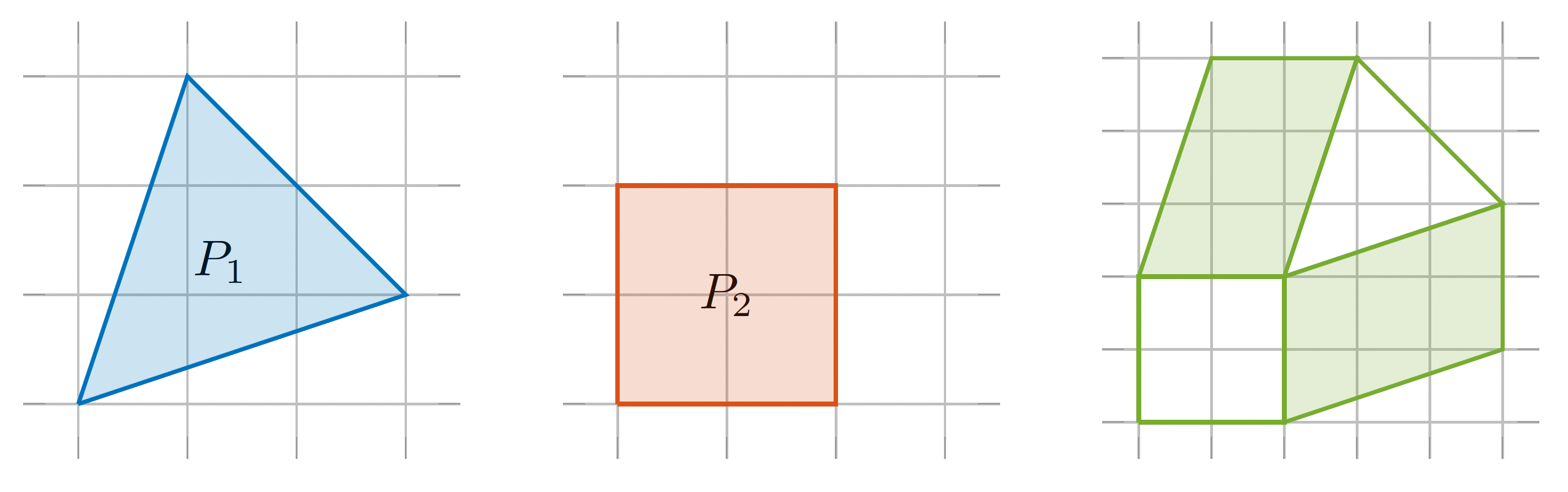}
\caption{The green area counts the solutions to equations with support in $P_1, P_2$.}
\label{fig:sparseexample}
\end{figure}
By applying~the formula for ${\rm MV}({\cal A}_1, {\cal A}_2)$ seen above, we find that the mixed volume for the system $f = g = 0$ is the green area in the right part of Fig.~\ref{fig:sparseexample}, which is 12. Note that the B\'ezout bound (Theorem \ref{thm:bezout}) is 16. and Theorem \ref{thm:kushnirenko} also predicts 12 solutions, with ${\cal A} = {\cal A}_1 \cup {\cal A}_2$. Hence ${\cal F}({\cal A}_1, {\cal A}_2) \subset {\cal F}({\cal A}) \subset \nabla_{4,4} \subset {\cal F}(4,4)$ and ${{\cal F}({\cal A}_1, {\cal A}_2) \not \subset \nabla_{\cal A}}$. 
\end{example}
Theorem \ref{thm:BKK} provides an upper bound on the number of isolated solutions to any system of polynomial equations with $n = s$. Although it improves significantly on B\'ezout's bound for many systems, it still often happens that the bound is not tight for systems in applications. That is, one often encounters systems $F \in \nabla_{{\cal A}_1, \ldots, {\cal A}_n}$. Even more refined root counts exist, such as those based on \emph{Newton-Okounkov bodies} \cite{kaveh2012newton}. In practice, with today's computational methods (see Section \ref{sec:methods}), we often count solutions reliably by simply solving the system. \emph{Certification methods} provide a proof for a \emph{lower} bound on the number of solutions \cite{breiding2020certifying}. The actual number of solutions is implied if one can match this with a theoretical upper bound.

\section{Computational methods} \label{sec:methods}
We give a brief introduction to two of the most important computational methods for solving polynomial equations. The first method uses \emph{normal forms}, the second is based on \emph{homotopy continuation}. We keep writing $F = (f_1, \ldots, f_s) = 0$ for the system we want to solve. We require $s \geq n$, and assume finitely many solutions over $\Kbar$. All methods discussed here compute all solutions over $\Kbar$, \emph{so we keep assuming that $K = \Kbar$ is algebraically closed}. An important distinction between normal forms and homotopy continuation is that the former works over any field $K$, while the latter needs $K =\mathbb{C}$. If the coefficients are contained in a subfield (e.g.~$\mathbb{R} \subset \mathbb{C}$), a significant part of the computation in normal form algorithms can be done over this subfield. Also, homotopy continuation is most natural when $n = s$, whereas $s > n$ is not so much a problem for normal forms. However, if $K =\mathbb{C}$ and $n = s$, continuation methods are extremely efficient and can compute millions of solutions.

\subsection{Normal form methods}
Let $I = \langle f_1, \ldots, f_s \rangle \subset R = K[x_1, \ldots, x_n]$ be the ideal generated by our polynomials. For ease of exposition, we assume that $I$ is \emph{radical}, which is equivalent to all points in $V_{K}(I) = V_{K}(f_1, \ldots, f_s)$ having multiplicity one. In other words, the Jacobian matrix $(\partial f_i/\partial x_j)$, evaluated at any of the points in $V_{K}(I)$, has rank $n$. Let us write $V_{K}(I) = \{ z_1, \ldots, z_\delta \} \subset K^n$ for the set of solutions, and $R/I$ for the quotient ring obtained from $R$ by the equivalence relation $f \sim g \Leftrightarrow f-g \in I$. The main observation behind normal form methods is that the coordinates of $z_i$ are encoded in the eigenstructure of the $K$-linear endomorphisms $M_g: R/I \rightarrow R/I$ given by $[f] \mapsto [g \cdot f]$, where $[f]$ is the residue class of $f$ in $R/I$. 

We will now make this precise. First, we show that $\dim_K R/I = \delta$.
We define ${\rm ev}_i : R/I \rightarrow K$ as ${\rm ev}_i ([f]) = f(z_i)$, and combine these to get
\[ {\rm ev} = (\, {\rm ev}_1, \ldots, {\rm ev}_\delta \, ) \, : \, R/I \longrightarrow K^{ \, \delta}, \quad \text{given by } \quad {\rm ev}([f]) = ( \,  f(z_1), \ldots, f(z_\delta) \, ).\]
By Hilbert's Nullstellensatz  \cite[Chapter 4]{cox2013ideals}, a polynomial $f \in R$ belongs to $I$ if and only if $f(z_i) = 0, i = 1, \ldots, \delta$. In other words, the map ${\rm ev}$ is injective. It is also surjective: there exist \emph{Lagrange polynomials} $\ell_i \in R$ satisfying $\ell_i(z_j) = 1$ if $i = j$ and $\ell_i(z_j) = 0$ for $i \neq j$ \cite[Lemma 3.1.2]{telen2020thesis}. We conclude that $R/I \overset{\rm ev}{\simeq} K^\delta$. 

The following statement makes our claim that \emph{the zeros $z_1, \ldots, z_\delta$ are encoded in the eigenstructure of $M_g$} concrete. 
\begin{theorem} \label{thm:EVthm}
The left eigenvectors of the $K$-linear map $M_g$ are the evaluation functionals ${\rm ev}_i, i = 1, \ldots, \delta$. The eigenvalue corresponding to ${\rm ev}_i$ is $g(z_i)$. 
\end{theorem}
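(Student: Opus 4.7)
The plan is to verify directly that each ${\rm ev}_i$ is a left eigenvector of $M_g$ with eigenvalue $g(z_i)$, and then use the dimension count $\dim_K R/I = \delta$ (established just before the theorem) to conclude that these functionals span the dual space $(R/I)^*$, so no other eigenstructure is missed. Before anything else, I would note that $M_g$ is well defined: since $I$ is an ideal, $f_1 - f_2 \in I$ implies $g(f_1 - f_2) \in I$, so $[gf]$ depends only on $[f]$.

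The core computation is immediate. For any $[f] \in R/I$,
\[
({\rm ev}_i \circ M_g)([f]) \, = \, {\rm ev}_i([g \cdot f]) \, = \, (g \cdot f)(z_i) \, = \, g(z_i) \cdot f(z_i) \, = \, g(z_i) \cdot {\rm ev}_i([f]),
\]
so ${\rm ev}_i \circ M_g = g(z_i) \cdot {\rm ev}_i$ in $(R/I)^*$. This is precisely the statement that ${\rm ev}_i$ is a left eigenvector of $M_g$ with eigenvalue $g(z_i)$.

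To see that the ${\rm ev}_i$ account for the \emph{entire} left eigenstructure (i.e.\ form a basis of $(R/I)^*$ of left eigenvectors), I would invoke the isomorphism ${\rm ev}: R/I \to K^\delta$ established above. The Lagrange polynomials $\ell_i$ satisfy ${\rm ev}_i([\ell_j]) = \delta_{ij}$, which exhibits $\{{\rm ev}_i\}$ as the dual basis to $\{[\ell_i]\}$; in particular the ${\rm ev}_i$ are $K$-linearly independent. Since $\dim_K (R/I)^* = \delta$, they form a basis of the dual space, and they simultaneously diagonalize $M_g^*$ with eigenvalues $g(z_1), \ldots, g(z_\delta)$.

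Honestly, I expect no serious obstacle here: the theorem is a direct consequence of two facts already assembled in the text, namely the ideal-property that makes $M_g$ well defined and the isomorphism $R/I \simeq K^\delta$. The only conceptual subtlety worth flagging is that when the values $g(z_i)$ are not pairwise distinct, additional left eigenvectors exist as $K$-linear combinations of the ${\rm ev}_i$ within each repeated eigenspace; the precise content of the theorem is that $\{{\rm ev}_1, \ldots, {\rm ev}_\delta\}$ furnishes a \emph{basis} of left eigenvectors, with eigenvalue assignment ${\rm ev}_i \mapsto g(z_i)$. Choosing $g$ so that the $g(z_i)$ are distinct (possible by a generic linear choice when $K$ is infinite) makes the eigenbasis unique up to scaling and allows recovery of each $z_i$ via the identities ${\rm ev}_i([x_j]) = (z_i)_j$, which is what the downstream eigenvalue algorithms exploit.
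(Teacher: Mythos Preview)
Your argument is correct and matches the paper's own proof almost verbatim: the core computation $({\rm ev}_i \circ M_g)([f]) = g(z_i)\cdot {\rm ev}_i([f])$ is identical, and completeness is deduced from the isomorphism ${\rm ev}: R/I \to K^\delta$ established just before the theorem. Your additional remarks on well-definedness of $M_g$, the dual-basis role of the Lagrange polynomials, and the subtlety with repeated eigenvalues are sound elaborations but not needed for the proof itself.
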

\begin{proof}
We have $({\rm ev}_i \circ M_g )([f]) = {\rm ev}_i([g \cdot f]) = g(z_i) f(z_i) = g(z_i) \cdot {\rm ev}_i([f])$, which shows that ${\rm ev}_i$ is a left eigenvector with eigenvalue $g(z_i)$. Moreover, the ${\rm ev}_i$ form a complete set of eigenvectors, since ${\rm ev}: R/I \rightarrow K^\delta$ is a $K$-linear isomorphism.
\end{proof}
We encourage the reader to check that the residue classes of the Lagrange polynomials $[\ell_i] \in R/I$ form a complete set of right eigenvectors. We point out that, after~choosing a basis of $R/I$, the functional ${\rm ev}_i$ is represented by a row vector $w_i^\top$ of length $\delta$, and $M_g$ is a \emph{multiplication matrix} of size $\delta \times \delta$. The eigenvalue relation in the proof of Theorem \ref{thm:EVthm} reads more familiarly as $w_i^\top M_g = g(z_i) \cdot w_i^\top$. 
Theorem \ref{thm:EVthm} suggests breaking up the task of computing $V_K(I) = \{ z_i \}_{i=1}^\delta$ into two~parts: 
\begin{enumerate}[leftmargin=.5in]
\item[(A)] Compute multiplication matrices $M_g$ and
\item[(B)] extract the coordinates of $z_i$ from their eigenvectors or eigenvalues. 
\end{enumerate}
For step (B), let $\{ [b_1] , \ldots, [b_\delta] \}$ be a $K$-basis for $R/I$, with $b_j \in R$. The vector $w_i$ is explicitly given by $w_i = (b_1(z_i), \ldots, b_\delta(z_i))$. If the coordinate functions $x_1, \ldots, x_n$ are among the $b_j$, one reads the coordinates of $z_i$ directly from the entries of $w_i$. If not, some more processing might be needed. Alternatively, one can choose $g = x_j$ and read the $j$-th coordinates of the $z_i$ from the eigenvalues of $M_{x_j}$. There are many things to say about these procedures, in particular about their efficiency and numerical stability. We refer the reader to \cite[Remark 4.3.4]{telen2020thesis} for references and more details, and do not elaborate on this here. 

We turn to step (A), which is where \emph{normal forms} come into play. Suppose a basis $\{ [b_1] , \ldots, [b_\delta] \}$ of $R/I$ is fixed. We identify $R/I$ with $B = {\rm span}_K(b_1, \ldots, b_\delta) \subset R$. For any $f \in R$, there are unique constants $c_j(f) \in K$ such that 
\begin{equation} \label{eq:NF}
    f - \sum_{j=1}^\delta c_j(f) \cdot b_j \in I.
\end{equation}  
These are the coefficients in the unique expansion of $[f] = \sum_{j=1}^\delta c_j(f) \cdot [b_j]$ in our basis. The $K$-linear map ${\cal N}: R \rightarrow B$ which sends $f$ to $\sum_{j=1}^\delta c_j(f) \cdot b_j$ is called a \emph{normal form}. Its key property is that ${\cal N}$ \emph{projects} $R$ onto $B$ \emph{along} $I$, meaning that ${\cal N} \circ {\cal N} = {\cal N}$ (${\cal N}_{|B}$ is the identity), and $\ker {\cal N} = I$. The multiplication map $M_g: B \rightarrow B$ is simply given by $M_g(b) = {\cal N}(g \cdot b)$. More concretely, the $i$-th column of the matrix representation of $M_g$ contains the coefficients $c_j(g \cdot b_i), j = 1, \ldots, \delta$ of ${\cal N}(g \cdot b_i)$. Here is a familiar example. 

\begin{example}{Example: normal forms for $n = 1$}
Let $I = \langle f \rangle = \langle a_0 + a_1 x + \cdots + a_d x^d \rangle$ be the ideal generated by the univariate polynomial $f \in K[x]$. For general $a_i$, there are $\delta = d$ roots with multiplicity one, hence $I$ is radical. The dimension $\dim_K K[x]/I$ equals $d$, and a canonical choice of basis is $\{ [1], [x], \ldots, [x^{d-1}]\}$. Let us construct the matrix $M_x$ in this basis. That is, we set $g = x$. We compute the normal forms ${\cal N}(x \cdot x^{i-1})$:
\[{\cal N}(x^i) = x^i, \,  i = 1, \ldots, d-1 \quad \text{and} \quad {\cal N}(x^d) = -a_d^{-1}(a_0 + a_1x + \cdots + a_{d-1} x^{d-1}). \]
One checks this by verifying that $x^i - {\cal N}(x^i) \in \langle f \rangle$. The coefficients $c_j(x^i), j = 1, \ldots, d$ of ${\cal N}(x^i)$ form the $i$-th column of the companion matrix $C_f$ in \eqref{eq:companionmatrix}. Hence $M_x = C_f$, and Theorem \ref{thm:EVthm} confirms that the eigenvalues of $C_f$ are the roots of $f$. 
\end{example}

Computing normal forms can be done using linear algebra on certain structured matrices, called \emph{Macaulay matrices}. We illustrate this with an example from \cite{telen2018stabilized}. 

\begin{example}{Example: Macaulay matrices} 
Consider the ideal $I = \langle f, g \rangle \subset \mathbb{Q}[x,y]$ given by $f= x^2+y^2 -2,g = 3x^2 -y^2 -2$. The variety $V_{\overline{\mathbb{Q}}}(I) = V_{\mathbb{Q}}(I)$ consists of 4 points $\{(-1,-1),(-1,1),(1,-1),(1,1)\}$, as predicted by Theorem \ref{thm:bezout}. 
We construct a \emph{Macaulay matrix} whose rows are indexed by $f,xf,yf,g,xg,yg$, and whose columns are indexed by all monomials of degree~$\leq 3$: 
\[
  \cal M=\kbordermatrix{%
      &  x^3 & x^2y & xy^2 & y^3 &x^2 & y^2 && 1  & x & y &  xy   \\
    f &   &  &  &  & 1  & 1 &\vrule& -2   \\
    xf & 1 &  & 1 &      &  &&\vrule& & -2 &  &  \\
    yf &  & 1  &  & 1    &   &&\vrule&   &  &-2&  \\
    g &   &  &  &&  3  & -1 &\vrule&-2 \\
    xg & 3  &  & -1 &   & &   &\vrule& & -2 &  &  \\
    yg &  & 3  &  & -1   &   & &\vrule&   &  & -2 & 
  }.
\]
The first row reads $f = 1 \cdot x^2 + 1 \cdot y^2 -2 \cdot 1$. A basis for $\mathbb{Q}[x,y]/I$ is $\{[1], [x],[y],[xy]\}$. These monomials index the last four columns. 
We now invert the leftmost $6 \times 6$ block and apply this inverse from the left to ${\cal M}$:
\[
  \tilde{\cal M}=\kbordermatrix{%
      &  x^3 & x^2y & xy^2 & y^3 &x^2 & y^2 && 1  & x & y &  xy   \\
     x^3-x & 1 &  &  &  &  &  &\vrule& & -1  &  &\\
     x^2y-y&   & 1&  &  &  &  &\vrule&&  & -1 &  \\
     xy^2-x &   &  & 1&  &  &  &\vrule&  &-1&  &  \\
     y^3-y &   &  &  & 1&  &  &\vrule&  &  &-1&\\
     x^2-1&   &  &  &  &1 &  &\vrule& -1 &&  &  \\
     y^2-1&   &  &  &  &  &1 &\vrule& -1 &  && 
  }.
\]
The rows of $\tilde{\cal M}$ are linear combinations of the rows of $\cal M$, representing polynomials in $I$. The first row reads $x^3 - 1 \cdot x \in I$. Comparing this with \eqref{eq:NF}, we see that we have found that ${\cal N}(x^3) = x$. Using $\tilde{\cal M}$ we can construct $M_x$ and $M_y$:
\[
  M_x =\kbordermatrix{%
   &{[x]}&{[x^2]}&{[xy]}&{[x^2y]} \\
    {[1]} & 0  & 1  & 0 & 0 \\
    {[x]} & 1 & 0 &  0  & 0\\
    {[y]} & 0  & 0  & 0  & 1\\
    {[xy]} & 0  & 0 & 1 & 0\\
  }, \quad 
  M_y =\kbordermatrix{%
   &{[y]}&{[xy]}&{[y^2]}&{[xy^2]} \\
    {[1]} & 0  & 0  & 1 & 0 \\
    {[x]} & 0 & 0 &  0  & 1\\
    {[y]} & 1  & 0  & 0  & 0\\
    {[xy]} & 0  & 1 & 0 & 0\\
  }. 
\]
The reader is encouraged to verify Theorem \ref{thm:EVthm} for these matrices. 
\end{example}
\begin{remark}
The entries of a Macaulay matrix $\cal M$ are the coefficients of the polynomials $f_1, \ldots, f_s$. An immediate consequence of the fact that normal forms are computed using linear algebra on Macaulay matrices is that when the coefficients of $f_i$ are contained in a subfield $\tilde{K} \subset K$, all computations in step $(A)$ can be done over $\tilde{K}$.  This assumes the polynomials $g$ for which we want to compute $M_g$ have coefficients~in~$\tilde{K}$. 
\end{remark}

As illustrated in the example above, to compute the matrices $M_g$ it is sufficient to determine the restriction of the normal form ${\cal N}: R \rightarrow B$ to a finite-dimensional $K$-vector space $V \subset R$, containing $g \cdot B$. The restriction ${\cal N}_{|V} : V \rightarrow B$ is called a \emph{truncated normal form}, see \cite{telen2018solving} and \cite[Chapter 4]{telen2020thesis}. The dimension of the space $V$ counts the number of columns of the Macaulay matrix.

Usually, one chooses the basis elements $b_j$ of $B$ to be monomials, and $g$ to be a coordinate function $x_i$. The basis elements may arise as standard monomials from a \emph{Gr\"obner basis} computation. We briefly discuss this important concept.

\begin{important}{Gr\"obner bases}
Gr\"obner bases are powerful tools for symbolic computation in algebraic geometry. A nice way to motivate their definition is by considering \emph{Euclidean division} as a candidate for a normal form map. In the case $n=1$, this rewrites $g \in K[x]$ as 
\begin{equation} \label{eq:univeuclid}
    g = q \cdot f + r, \quad \text{where} \quad \deg(r)<d.
\end{equation} 
Clearly $[g] = [r]$ in $K[x]/\langle f \rangle$, and ${\cal N}(g) = r$ since $r \in B = {\rm span}_K(1,x, \ldots, x^{d-1})$.

To generalize this to $n>1$ variables, we fix a \emph{monomial order} $\preceq$ on $R = K[x_1, \ldots, x_n]$ and write ${\rm LT}(f)$ for the \emph{leading term} of $f$ with respect to $\preceq$. The reader who is unfamiliar with monomial orders can consult \cite[Chapter 2, \S 2]{cox2013ideals}. As above, let $I = \langle f_1, \ldots, f_s \rangle \subset R$ be a radical ideal such that $|V_K(I)|=\delta <\infty$. As basis elements $b_1, \ldots, b_\delta$ of $B \simeq R/I$, we use the $\delta$ $\preceq$--smallest monomials which are linearly independent modulo our ideal $I$. They are also called \emph{standard monomials}. By \cite[Chapter 9, \S 3, Theorem 3]{cox2013ideals}, there exists an algorithm which, for any input $g \in R$, computes $q_1, \ldots, q_s, r \in R$ such that
\begin{equation} \label{eq:multieuclid}
g = q_1 \cdot f_1 + \cdots + q_s \cdot f_s + r, \quad \text{where ${\rm LT}(f_i)$ does not divide any term of $r$, $\forall i$}.
\end{equation}
This algorithm is called \emph{multivariate Euclidean division}. Note how the condition ``${\rm LT}(f_i)$ does not divide any term of $r$, for all $i$'' generalizes $\deg(r)<d$ in \eqref{eq:univeuclid}. From \eqref{eq:multieuclid}, it is clear that $[g] = [r]$. However, we do \emph{not} have $r \in B$ in general. Hence, unfortunately, sending $g$ to its remainder $r$ is usually not a normal form\ldots but it is when $f_1, \ldots, f_s$ is a Gr\"obner basis!

A set of polynomials $g_1, \ldots, g_k \in I$ forms a \emph{Gr\"obner basis} of the ideal $I$ if the leading terms ${\rm LT}(g_1), \ldots, {\rm LT}(g_k)$ generate the \emph{leading term ideal} $\langle {\rm LT}(g) \, : \, g \in I \rangle$. We point out that no finiteness of $V_K(I)$ or radicality of $I$ is required for this definition. The remainder $r$ in $g = q_1 \cdot g_1 + \cdots + q_k \cdot g_k + r$ where ${\rm LT}(f_i)$ does not divide any term of $r$, for all $i$, now satisfies $[g] = [r]$ \emph{and $r \in B$}. This justifies the following~claim:
\begin{center}
\emph{Taking remainder upon Euclidean division by a Gr\"obner basis is a normal form}. 
\end{center}
Computing a Gr\"obner basis $g_1, \ldots, g_k$ from a set of input polynomials $f_1, \ldots, f_s$ can be interpreted as Gaussian elimination on a Macaulay matrix \cite{faugere1999new}. Once this has been done, multiplication matrices are computed via taking remainder upon Euclidean division by $\{g_1, \ldots, g_k \}$. 

On a sidenote, we point out that Gr\"obner bases are often used for the \emph{elimination of variables}. For instance, if $g_1, \ldots, g_k$ form a Gr\"obner basis of an ideal $I$ with respect to a \emph{lex} monomial order for which $x_1 \prec x_2 \prec \cdots \prec x_n$, we have for $j = 1, \ldots n$ that the \emph{$j$-th elimination ideal}
\[ I_j \, = \, I \cap K[x_1, \ldots, x_j]  \, = \,  \langle g_i \, :\, g_i \in K[x_1, \ldots, x_j] \rangle \]
is generated by those elements of our Gr\"obner basis which involve only the first $j$ variables, see \cite[Chapter 3, \S 1, Theorem 2]{cox2013ideals}. In our case, a consequence is that one of the $g_i$ is univariate in $x_1$, and its roots are the $x_1$-coordinates of $z_1, \ldots, z_\delta$. The geometric counterpart of computing the $j$-th elimination ideal is \emph{projection} onto a $j$-dimensional coordinate space: the variety $V_K(I_j) \subset K^j$ is obtained from $V_K(I) \subset K^n$ by forgetting the final $n-j$ coordinates $(x_1, \ldots, x_n) \mapsto (x_1, \ldots, x_j)$ and taking the closure of the image. Here are two examples. 
\begin{example}{Example: the projection of a space curve}
\begin{minipage}{0.6 \linewidth}
To the right we show a blue curve in $\mathbb{R}^3$ defined by an ideal $I \subset \mathbb{R}[x,y,z]$. Its Gr\"obner basis with respect to the lex ordering $x \prec y \prec z$ contains $g_1 \in \mathbb{R}[x,y]$, which generates $I_2$. The variety $V_{\mathbb{R}}(I_2) =V_{\mathbb{R}}(g_1)  \subset \mathbb{R}^2$ is the orange curve in the picture.
\end{minipage}
\quad
\begin{minipage}{0.35 \linewidth}
\centering
\includegraphics[height = 3.5cm]{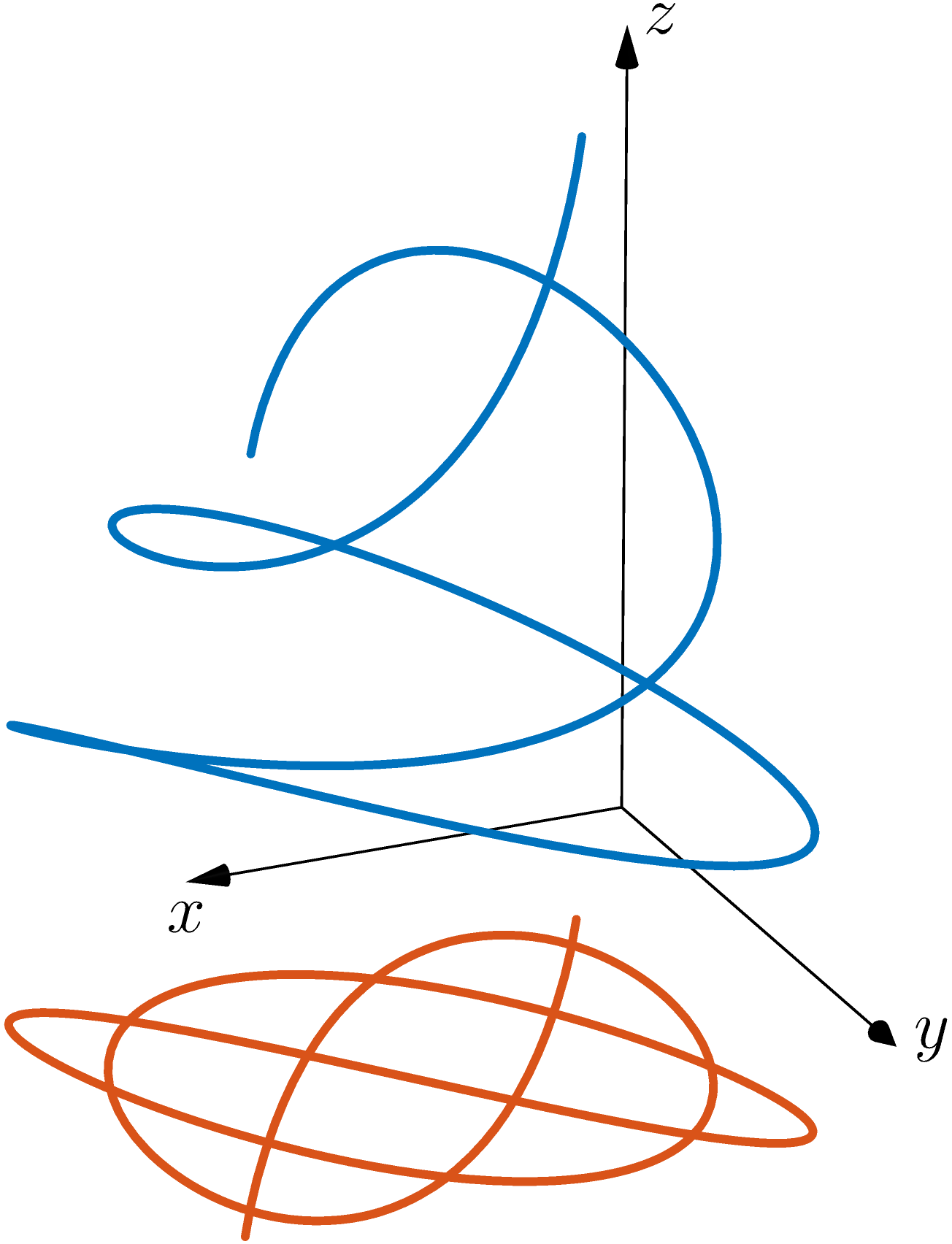}
\vspace{0.2cm}
\end{minipage}
\end{example}

\begin{example}{Example: smooth del Pezzo surfaces}
In \cite{mitankin2020rational}, the authors study del Pezzo surfaces of degree 4 in $\mathbb{P}^4$ with defining equations $x_0x_1-x_2x_3 = a_0x_0^2+a_1x_1^2+a_2x_2^2+a_3x_3^2+a_4x_4^2 = 0$. We will substitute $x_4 = 1 - x_0 - x_1 - x_2 - x_3$ to reduce to the affine case.
It is claimed that the smooth del Pezzo surfaces of this form are those for which the parameters $a_0, \ldots, a_4$ lie outside the hypersurface $H = \{ a_0a_1a_2a_3a_4(a_0a_1-a_2a_3) = 0 \}$. This hypersurface is the projection of the variety 
\[ \left \{ (a, x) \in \mathbb{Q}^5 \times \mathbb{Q}^4 \, : \,  x_0x_1-x_2x_3 = \sum_{i=0}^3 a_i x_i^2 + a_4 \big (1-\sum_{i=0}^3x_i \big )^2 = 0 \text{ and } {\rm rank}(J) < 2  \right \} \]
onto $\mathbb{Q}^5$. Here $J$ is the $2 \times 4$ Jacobian matrix of our two equations with respect to the four variables $x_0, x_1, x_2, x_3$. The defining equation of $H$ is computed in \texttt{Macaulay2} \cite{M2} as follows:

\footnotesize
\begin{verbatim}
R = QQ[x_0..x_3,a_0..a_4]
x_4 = 1-x_0-x_1-x_2-x_3
I = ideal( x_0*x_1-x_2*x_3 , a_0*x_0^2 + a_1*x_1^2 + ... + a_4*x_4^2 )
M = submatrix( transpose jacobian I , 0..3 )
radical eliminate( I+minors(2,M) , {x_0,x_1,x_2,x_3} )
\end{verbatim}
\normalsize
The work behind the final command is a Gr\"obner basis computation. 
\end{example}

\begin{remark}
In a numerical setting, it is better to use \emph{border bases} or more general bases to avoid amplifying rounding errors. Border bases use basis elements $b_i$ for $B$ whose elements satisfy a connectedness property. See, for instance, \cite{mourrain1999new} for details. They do not depend on a monomial order. For a summary and comparison between Gr\"obner bases and border bases, see \cite[Sections 3.3.1, 3.3.2]{telen2020thesis}. Nowadays, bases are selected adaptively by numerical linear algebra routines, such as QR decomposition with optimal column pivoting or singular value decomposition. This often yields a significant improvement in terms of accuracy. See, for instance, Section 7.2 in \cite{telen2018stabilized}. 
\end{remark}
\end{important}

\subsection{Homotopy Continuation} \label{subsec:homotopy}
The goal of this subsection is to briefly introduce the method of homotopy continuation for solving polynomial systems. For more details, we refer to the textbook~\cite{wampler2005numerical}.

We set $K = \mathbb{C}$ and $n = s$. We think of $F = (f_1, \ldots, f_n) \in R$ as an element of a family ${\cal F}$ of polynomial systems. The reader can replace ${\cal F}$ with any of the families seen in Section \ref{sec:nosolutions}. A \emph{homotopy} in ${\cal F}$ with \emph{target system} $F \in {\cal F}$ and \emph{start system} $G \in {\cal F}$  is a continuous deformation of the map $G = (g_1, \ldots, g_n) : \mathbb{C}^n \rightarrow \mathbb{C}^n$ into $F$, in such a way that all systems obtained throughout the deformation are contained in ${\cal F}$. For instance, When $F \in {\cal F}(d_1,\ldots,d_n)$ as in Section \ref{subsec:bez} and $G$ is any other system in ${\cal F}(d_1,\ldots,d_n)$, a homotopy is $H(x;t) = t \cdot F(x) + (1-t) \cdot G(x)$, where $t$ runs from 0 to 1. Indeed, for any fixed $t^* \in [0,1]$, the degrees of the equations remain bounded by $(d_1, \ldots, d_n)$, hence $H(x;t^*) \in {\cal F}(d_1, \ldots, d_n)$. 

The method of homotopy continuation for solving the target system $f_1 = \cdots = f_n = 0$ assumes that a start system $g_1 = \cdots = g_n = 0$ can easily be solved. The idea is that transforming $G$ continuously into $F$ via a homotopy $H(x;t)$ in ${\cal F}$ transforms the solutions of $G$ continuously into those of $F$. Here is an example with $n = 1$.
\begin{example}{Example: $n = s = 1, {\cal F} = {\cal F}(3)$}
Let $f = -6 + 11x - 6x^2 + x^3 = (x-1)(x-2)(x-3)$ be the \emph{Wilkinson polynomial} of degree 3. We view $f = 0$ as a member of ${\cal F}(3)$ and choose the start system $g = x^3-1 = 0$. The solutions of $g = 0$, are the third roots of unity. The solutions of $H(x;t) = \gamma \cdot t \cdot f(x) + (1-t) \cdot g(x)$ travel from these roots to the integers $1, 2, 3$ as $t$ moves from 0 to 1. This is illustrated in Fig.~\ref{fig:wilkinson}. The random complex constant $\gamma$ is needed to \emph{avoid the discriminant}, see below. This is known as the \emph{gamma trick}.
\begin{figure}
    \centering
    \includegraphics[height = 5.5cm]{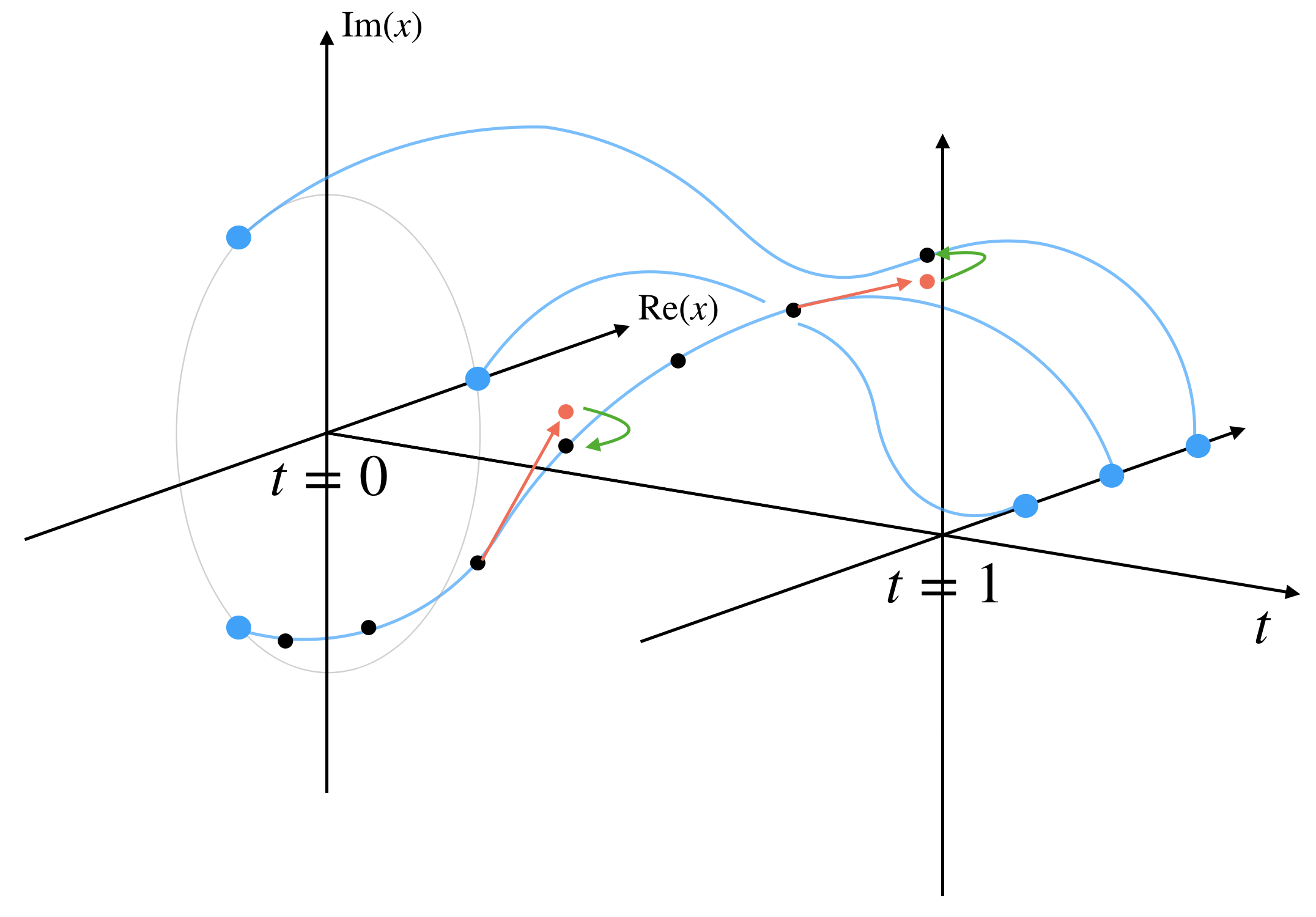}
    \caption{The third roots of unity travel to $1, 2, 3$ along continuous paths.}
    \label{fig:wilkinson}
\end{figure}
\end{example}
More formally, if $H(x;t) = (h_1(x;t), \ldots, h_n(x;t))$ is a homotopy with $t \in [0,1]$, the solutions describe continuous paths $x(t)$ satisfying $H(x(t);t) = 0$. Taking the derivative with respect to $t$ gives the \emph{Davidenko differential equation}
\begin{equation} \label{eq:davidenko}
    \frac{{\rm d} H(x(t),t)}{{\rm d} t} \, = \,  J_x \cdot \Dot{x}(t) + \frac{\partial H}{\partial t}(x(t),t) \, = \, 0, \quad \text{with } J_x = \left ( \frac{\partial h_i}{\partial x_j} \right )_{i,j}. 
\end{equation}
Each start solution $x^*$ of $g_1 = \cdots = g_n = 0$ gives an initial value problem with $x(0) = x^*$, and the corresponding solution path $x(t)$ can be approximated using any numerical ODE method. This leads to a discretization of the solution path, see the black dots in Fig.~\ref{fig:wilkinson}. The solutions of $f_1 = \cdots = f_n = 0$ are obtained by evaluating the solution paths at $t = 1$. The following are important practical remarks. 

\begin{important}{Predict and correct}
Naively applying ODE methods for solving the Davidenko equation \eqref{eq:davidenko} is not the best we can do. Indeed, we have the extra information that the solution paths $x(t)$ satisfy the implicit equation $H(x(t),t) = 0$. This is used to improve the accuracy of the ODE solver in each step. Given an approximation of $x(t^*)$ at any fixed $t^* \in [0,1)$ and a step size $0 < \Delta t \ll 1$, one approximates $x(t^* + \Delta t)$ by $\tilde{x}$ using, for instance, Euler's method. This is called the \emph{predictor} step. Then, one refines $\tilde{x}$ to a satisfactory approximation of $x(t^* + \Delta t)$ by using $\tilde{x}$ as a starting point for Newton iteration on $H(x,t^* + \Delta t) = 0$. This is the \emph{corrector step}. The two-step process is illustrated in Fig.~\ref{fig:wilkinson} (\emph{predict} in orange, \emph{correct} in green, solution paths $x(t)$ in blue). In the right part of the figure, the predict-correct procedure fails: because of a too-large step size $\Delta t$ in the predictor step, the Newton correction converges to a \emph{different} path. This phenomenon is called \emph{path jumping}, and to avoid it one must choose the stepsize $\Delta t$ adaptively. Recent work in this direction uses Pad\'e approximants, see \cite{telen2020robust,timme2021mixed}.
\end{important}

\newpage
\begin{important}{Avoid the discriminant}
Each of the families seen in Section \ref{sec:nosolutions} has a subvariety $\nabla_{\cal F} \subset {\cal F}$ consisting of systems with non-generic behavior in terms of their number of solutions. This subvariety is sometimes referred to as the \emph{discriminant} of ${\cal F}$, for reasons alluded to at the beginning of Section \ref{sec:nosolutions}. When the homotopy $H(x;t)$ crosses $\nabla_{\cal F}$, i.e.~$H(x;t^*) \in \nabla_{\cal F}$ for some $t^* \in [0,1)$, two or more solution paths collide at $t^*$, or some solution paths diverge. This is not allowed for the numerical solution of \eqref{eq:davidenko}. Fortunately, crossing $\nabla_{\cal F}$ can be avoided. The discriminant $\nabla_{\cal F}$ has complex codimension at least one, hence \emph{real} codimension at least two. Since the homotopy $t \mapsto H(x;t)$ describes a one-real-dimensional path in ${\cal F}$, it is always possible to go \emph{around} the discriminant. See for instance \cite[Section 7]{wampler2005numerical}. When the target system $F$ belongs to the discriminant, \emph{end games} are used to deal with colliding/diverging paths at $t = 1$ \cite[Section 10]{wampler2005numerical}. This story implies that the number of paths tracked in a homotopy algorithm is the generic number of solutions of the family ${\cal F}$. In that sense, results like Theorems \ref{thm:bezout}, \ref{thm:kushnirenko} and \ref{thm:BKK} characterize the complexity of homotopy continuation in the respective families.
\end{important}

\begin{important}{Start systems in practice} There are recipes for start systems in the families ${\cal F}$ from Section \ref{sec:nosolutions}. For instance, we use $G = (x_1^{d_1}-1, \ldots, x_n^{d_n}-1)$ for ${\cal F}(d_1, \ldots, d_n)$. The $d_1 \cdots d_n$ solutions can easily be written down. Note that $G \notin \nabla_{d_1, \ldots, d_n}$. For the families ${\cal F}({\cal A})$ and ${\cal F}({\cal A}_1, \ldots, {\cal A}_n)$, an algorithm to solve start systems was developed in \cite{huber1995polyhedral}. For solving start systems of other families, one may use \emph{monodromy loops} \cite{duff2019solving}.
\end{important}

\section{Case study: 27 lines on the Clebsch surface} \label{sec:lines}
A classical result from intersection theory states that every smooth cubic surface in complex three-space contains exactly 27 lines. In this final section, we use Gr\"obner bases and homotopy continuation to compute lines on the Clebsch surface defined by \eqref{eq:fclebsch}. This particular surface is famous for the fact that all its 27 lines are real. Let $f(x,y,z)$ be as in \eqref{eq:fclebsch}. A line in $\mathbb{R}^3$ parameterized by $(a_1 + t \cdot b_1, a_2 + t \cdot b_2, a_3 + t \cdot b_3)$ is contained in our Clebsch surface if and only if 
$f (  a_1 + t \cdot b_1, a_2 + t \cdot b_2, a_3 + t \cdot b_3  ) \equiv  0 .$
The left-hand side evaluates to a cubic polynomial in $t$ with coefficients in the ring $\mathbb{Z}[a_1,a_2,a_3,b_1,b_2,b_3] = \mathbb{Z}[a,b]$:
\[ f\, ( \, a_1 + t \cdot b_1, a_2 + t \cdot b_2, a_3 + t \cdot b_3 \, ) \, = \, f_1(a,b) \cdot t^3 + f_2(a,b) \cdot t^2 + f_3(a,b) \cdot t + f_4(a,b).\]
The lines contained in the Clebsch surface satisfy
\begin{equation} \label{eq:4eqs}
    f_1(a,b) \, = \, f_2(a,b) \, = \, f_3(a,b) \, = \, f_4(a,b) \, = \, 0. 
\end{equation} 
We further reduce this to a system of $s = 4$ equations in $n = 4$ unknowns by removing the redundancy in our parameterization of the line: the space of lines in three-space (i.e.~the Grassmannian $\mathbb{G}(1,3)$) has dimension four, not six. We may impose a random affine-linear relation among the $a_i$ and $b_i$. We choose to substitute 
\[ a_3 = -(7+a_1+3 a_2)/5, \quad b_3 = -(11+3b_1+5b_2)/7. \]

Implementations of Gr\"obner bases are available, for instance, in {\tt Maple} \cite{maple} and in ${\tt msolve}$ \cite{berthomieu2021msolve}, which can be used in ${\tt julia}$ via the package ${\tt msolve.jl}$. This is also available in the package \texttt{Oscar.jl} \cite{OSCAR}. The following snippet of \texttt{Maple} code constructs our system \eqref{eq:4eqs} and computes a Gr\"obner basis with respect to the graded lexicographic monomial ordering with $a_1 \succ a_2 \succ b_1 \succ b_2$. This basis consists of 23 polynomials $g_1, \ldots, g_{23}$.

\scriptsize
\begin{verbatim}
> f := 81*(x^3 + y^3 + z^3) - 189*(x^2*y + x^2*z + x*y^2 + x*z^2 + y^2*z + y*z^2) 
       + 54*x*y*z + 126*(x*y + x*z + y*z) - 9*(x^2 + y^2 + z^2) - 9*(x + y + z) + 1:
> f := expand(subs({x = t*b[1] + a[1], y = t*b[2] + a[2], z = t*b[3] + a[3]}, f)):
> f := subs({a[3] = -(7 + a[1] + 3*a[2])/5, b[3] = -(11 + 3*b[1] + 5*b[2])/7}, f):
> ff := coeffs(f, t):
> with(Groebner):
> GB := Basis({ff}, grlex(a[1], a[2], b[1], b[2]));
> nops(GB);                                                ----> output: 23
\end{verbatim}
\normalsize

\noindent The set of standard monomials is the first output of the command \texttt{NormalSet}. It consists of 27 elements, and the multiplication matrix with respect to $a_1$ in this basis is constructed using \texttt{MultiplicationMatrix}:

\scriptsize
\begin{verbatim}
> ns, rv := NormalSet(GB, grlex(a[1], a[2], b[1], b[2])):
> nops(ns);                                                ----> output: 27
> Ma1 := MultiplicationMatrix(a[1], ns, rv, GB, grlex(a[1], a[2], b[1], b[2])):
\end{verbatim}
\normalsize

\noindent This is a matrix of size $27 \times 27$ whose eigenvectors reveal the solutions (Theorem~\ref{thm:EVthm}). 
We now turn to \texttt{julia} and use \texttt{msolve} to compute the 27 lines on $\{f = 0 \}$ as~follows: 

\footnotesize
\begin{verbatim}
using Oscar
R,(a1,a2,b1,b2) = PolynomialRing(QQ,["a1","a2","b1","b2"])
I = ideal(R, [-189*b2*b1^2 - 189*b2^2*b1 + 27*(11 + 3*b1 + 5*b2)*b1^2 + ...
A, B = msolve(I)
\end{verbatim}
\normalsize

\noindent The output \texttt{B} contains 4 rational coordinates $(a_1,a_2,b_1,b_2)$ of 27 lines which approximate the solutions. To see them in floating point format, use for instance 

\footnotesize
\begin{verbatim}
[convert.(Float64,convert.(Rational{BigInt},b)) for b in B]
\end{verbatim}
\normalsize

\noindent We have drawn three of these lines on the Clebsch surface in Fig.~\ref{fig:clebsch} as an illustration. Other software systems supporting Gr\"obner bases are \texttt{Macaulay2} \cite{M2}, \texttt{Magma} \cite{MR1484478}, \texttt{Mathematica} \cite{mathematica} and \texttt{Singular} \cite{greuel2001singular}. 

\begin{figure}
    \centering
    \includegraphics[height = 3.6cm]{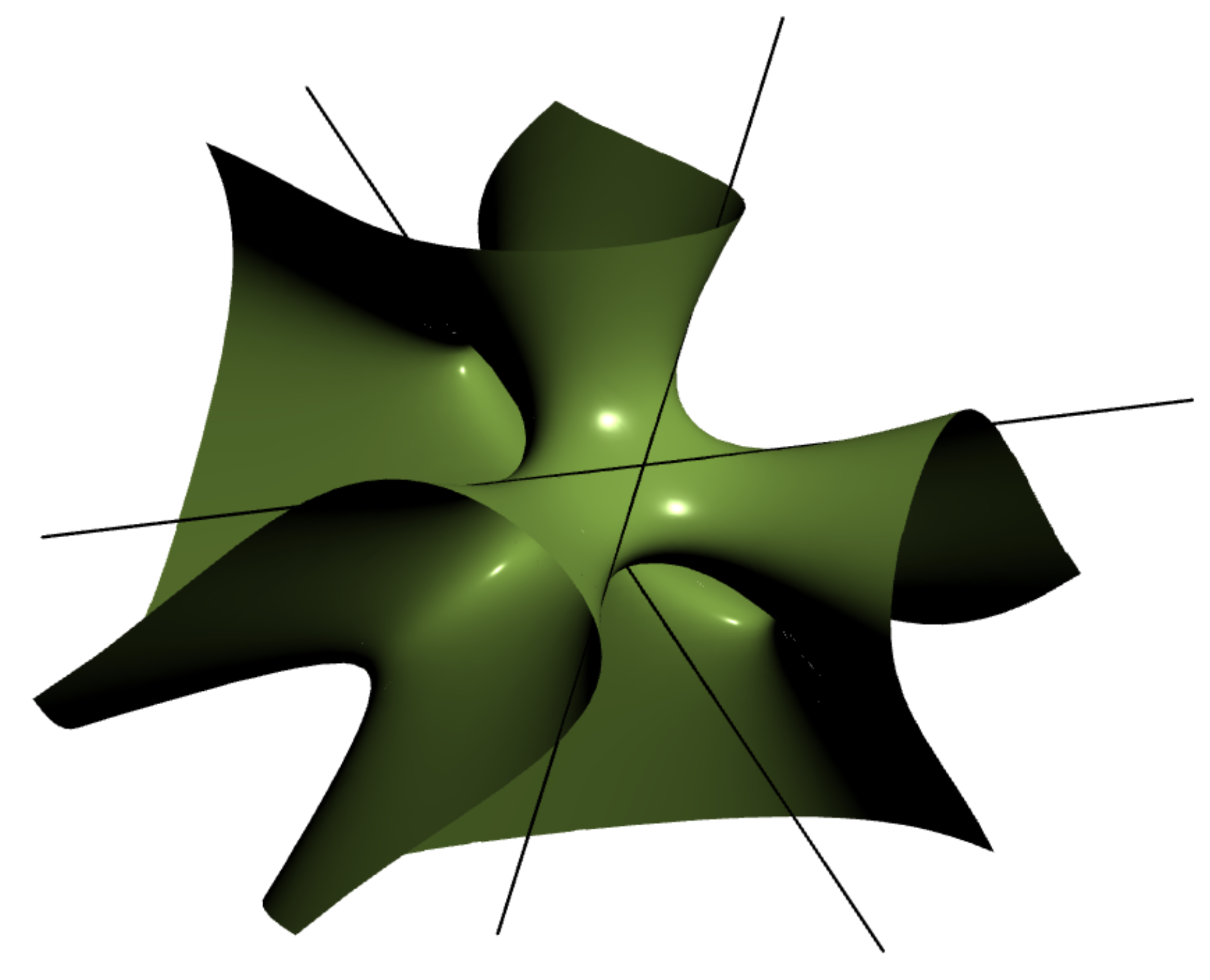}
    \qquad
    \includegraphics[height = 3.6cm]{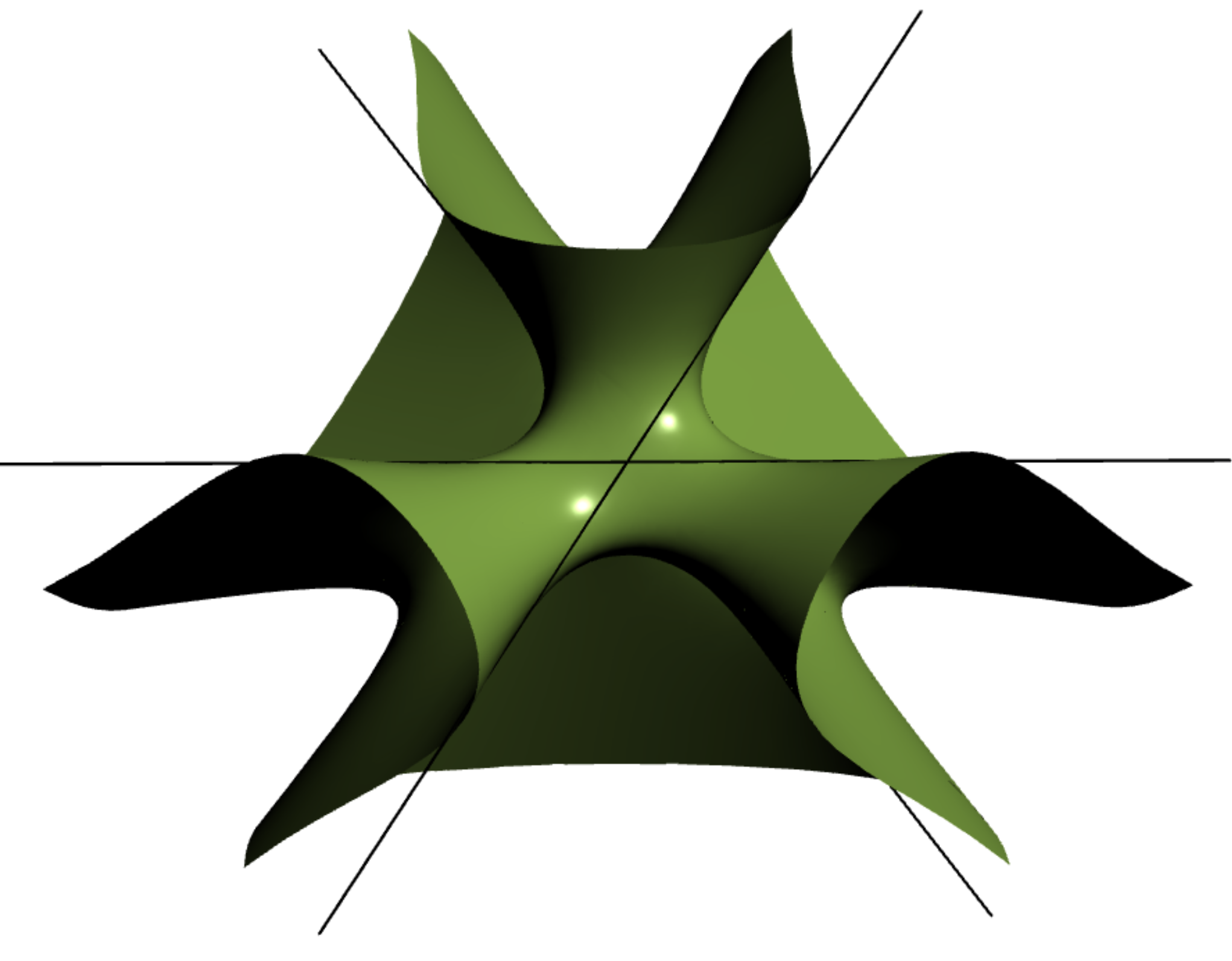}
    \caption{Two views on the Clebsch surface with three of its 27 lines. }
    \label{fig:clebsch}
\end{figure}

Homotopy continuation methods provide an alternative way to compute our 27 lines. Here we use the \texttt{julia} package \texttt{HomotopyContinuation.jl} \cite{breiding2018homotopycontinuation}.

\scriptsize
\begin{verbatim}
using HomotopyContinuation
@var x y z t a[1:3] b[1:3]
f = 81*(x^3 + y^3 + z^3) - 189*(x^2*y + x^2*z + x*y^2 + x*z^2 + y^2*z + y*z^2) 
    + 54*x*y*z + 126*(x*y + x*z + y*z) - 9*(x^2 + y^2 + z^2) - 9*(x + y + z) + 1
fab = subs(f, [x;y;z] => a+t*b)
E, C = exponents_coefficients(fab,[t])
F = subs(C,[a[3];b[3]] => [-(7+a[1]+3*a[2])/5; -(11+3*b[1]+5*b[2])/7])
R = solve(F)
\end{verbatim}
\normalsize

\noindent The output is shown in Fig.~\ref{fig:output}. There are 27 solutions, as expected. The last line indicates that a \texttt{:polyhedral} start system was used. In our terminology, this means that the system was solved using a homotopy in the family ${\cal F}({\cal A}_1, \ldots, {\cal A}_4)$ from Section \ref{subsec:bkk}. The number of tracked paths is 45, which is the mixed volume ${\rm MV}({\cal A}_1, \ldots, {\cal A}_4)$ of this family. The discrepancy $27 < 45$ means that our system $F$ lies in the discriminant $\nabla_{{\cal A}_1, \ldots, {\cal A}_4}$. The 18 `missing' solutions are explained in \cite[Section 3.3]{bender2022toric}. The output also tells us that all solutions have multiplicity one (this is the meaning of \texttt{non-singular}) and all of them are real. 

\begin{figure}[h!]
    \centering
    \includegraphics[height = 1.8cm]{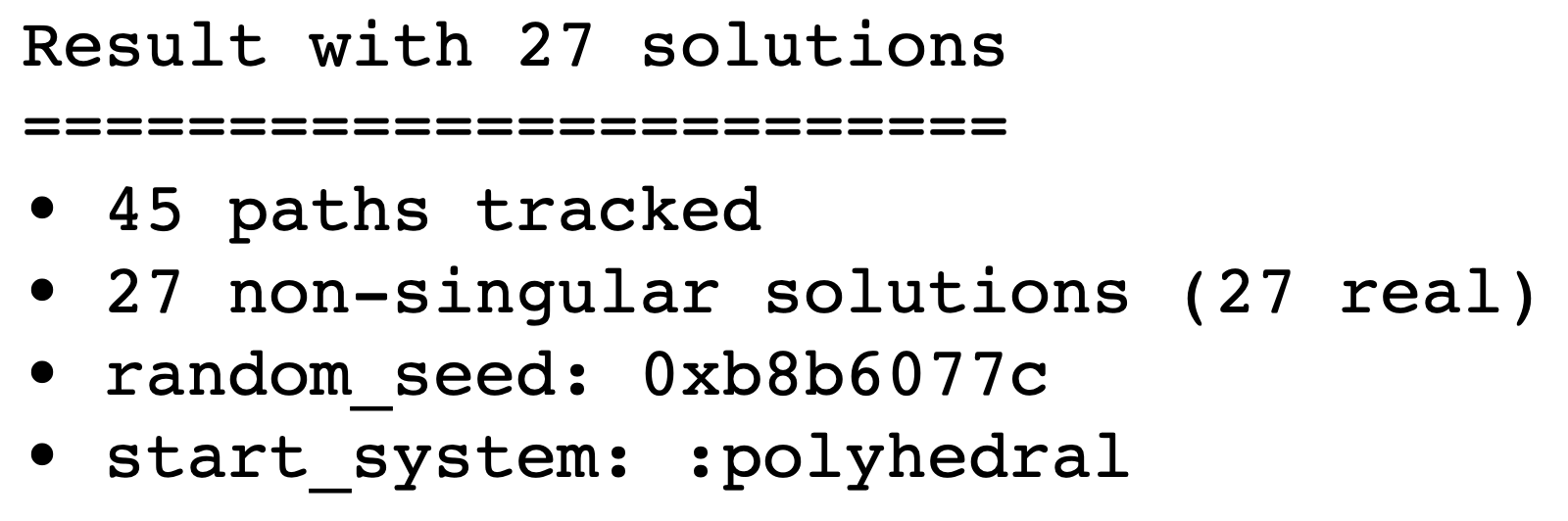}
    \caption{The \texttt{julia} output when computing 27 lines on the Clebsch surface.}
    \label{fig:output}
\end{figure}

\noindent Other software implementing homotopy continuation are \texttt{Bertini} \cite{bates2013numerically} and \texttt{PHCpack} \cite{verschelde1999algorithm}. Numerical normal form methods are used in \texttt{EigenvalueSolver.jl} \cite{bender2021yet}.

\vspace{-0.5cm}

\end{document}